\documentclass[12pt]{article}
\usepackage[utf8]{inputenc}

\usepackage{amssymb,    enumitem,    xcolor}
\usepackage{tikz}
\usepackage[normalem]{ulem}

\usepackage{graphics,    graphicx,    amsmath}
\usepackage{lmodern,    microtype}

\usepackage{mathtools,    amsthm,    icomma,    upgreek,    xfrac,    amsmath,    dsfont,    float}
\mathtoolsset{showonlyrefs,    mathic}

\usepackage[colorlinks=true,     urlcolor=blue,     linkcolor=blue,     citecolor=blue]{hyperref}

\allowdisplaybreaks

\newcommand{\E}{\mathbf{E}}

\newcommand{\N}{\mathbf{N}}

\theoremstyle{plain}
\newtheorem{theorem}{Theorem}[section]

\newtheorem{proposition}[theorem]{Proposition}
\newtheorem{corollary}[theorem]{Corollary}
\theoremstyle{definition}

\newtheorem{example}{Example}[section]
\newtheorem{remark}[theorem]{Remark}
\newtheorem{fact}{Fact}[section]

\numberwithin{equation}{section}

\def\({\left(}
\def\){\right)}
\def\[{\left[}
\def\]{\right]}

\def\ind{{\mathbb I}}

\def\({\left(}
\def\){\right)}

\def\[{\left[}
\def\]{\right]}

\newcommand{\pr}{\mathbb{P}}
\newcommand{\cP}{{\mathcal P}}

\newcommand{\V}{\mathcal{V}}

\newcommand{\A}{\mathcal{A}}

\newcommand{\covers}{{\mathcal C}}
\newcommand{\cover}{{\mathbb C}}

\newcommand{\kli}{\mathbf{K}}

\newcommand{\Pra}[1]{\pr\left(#1\right)}
\newcommand{\Gnmp}{{\mathcal G}(n,     m,     p)}

\newcommand{\Gnp}{{\mathcal G}(n,   {\bf p})}

\newcommand{\pp}{{\bf p}}
\newcommand{\calP}{{\cal P}}
\newcommand{\Ind}{{\mathbb I}}
\newcommand{\Jind}{{\mathbb J}}
\newcommand{\Lind}{{\mathbb L}}

\allowdisplaybreaks

\date{\vspace{-5ex}}

\title{
\huge
Subgraph probabilities\\ in random intersection graphs.
} 

\author{ 
Katarzyna Rybarczyk\thanks{Faculty of Mathematics and Computer Science,       Adam Mickiewicz University,       Pozna\'n,       Poland.
e-mail: {\tt kryba@amu.edu.pl}. ORCID: 0000-0002-3998-6482}
\and
Grzegorz Serafin\thanks{Faculty of Pure and Applied Mathematics,        Wroc{\l}aw University of Science and Technology,       Ul. Wybrze\.ze Wyspia\'nskiego 27,        Wroc{\l}aw,       Poland.
e-mail: {\tt grzegorz.serafin@pwr.edu.pl}. ORCID: 0000-0003-3170-6850}}

\begin{document}
\maketitle

\begin{abstract} 
Since their introduction by Karo\'nski,  Scheinerman,  and Singer--Cohen in 90's,  random intersection graphs have attracted much attention,  mostly due to their various applications. In this article,  we establish a general result for the probability that a given small subgraph is present in a random intersection graph. Our theorem applies to a broad range of parameters,  including regimes that have not previously been considered in the literature. Moreover, it extends known results and situates them within a broader framework, making this approach a convenient tool for comparing the probabilities of graphs in the random intersection graph model. To prove our main result,  we develop new proof techniques that,  to the best of our knowledge,  have not previously been employed in the study of small subgraphs in random intersection graphs.   Many special cases and examples are presented as well.
\end{abstract}
\noindent\emph{Keywords}: random intersection graph,      subgraph probability\\
\noindent\emph{MSC2020 subject classification}: Primary 05C80; Secondary 05C82.

\section{Introduction}

The random intersection graph model $\Gnmp$ was introduced in \cite{1999KarSchSin}. Let $\V$ be a set of $n$ vertices and let $\A=\{a_1,\ldots,a_m\}$ be a set of $m=m(n)$ attributes, where $m$ is nondecreasing and $p=p(n)\in(0,1)$. Independently for each pair $(v,a_i)\in\V\times\A$, vertex $v$ is assigned attribute $a_i$ with probability $p$. The random intersection graph $\Gnmp$ is the graph on $\V$ in which two distinct vertices are adjacent whenever they are attributed at least one common attribute.

Since its introduction,  the random intersection graph has gained a lot of attention due to wide applications such as ``gate matrix layout'' for VLSI design \cite{1999KarSchSin},    cluster analysis and classification \cite{RIGGodehardt1},    analysis of complex networks  \cite{RIGClustering2,    RIGTunableDegree},    secure wireless networks  \cite{WSNphase2} or epidemics \cite{GpEpidemics}. Further details can be found in the survey papers \cite{RIGsurvey1,   RIGsurvey2,  Spirakis2013}. 

One of the difficulties in studying this model,  compared with the classical binomial model $\Gnp$,  is the dependence between edges. Although a series of papers \cite{FillScheinSinger2000Equivalence, Rybarczyk2011Equivalence, KimLeeNa2018Equivalence, BrennanBresterNagaraj2020Equivalence} has shown that,  when $m$ is asymptotically larger than $n^3$,  the random intersection graph $\Gnmp$ is asymptotically equivalent to a binomial random graph whose edge probability is close to that of $\Gnmp$,  for the remaining range of $m$,  even estimating the existence probabilityof a given small subgraph  remains quite challenging.
 
The first related results appeared in \cite{1999KarSchSin},  where the notion of so--called clique covers was introduced and an asymptotic expression for the probability of a subgraph with a specified clique cover was derived under the additional assumption that $m=\lfloor n^{\alpha}\rfloor$,  where $\alpha>0$ is a constant. Although a result analogous to Theorem~\ref{Lem:CCprobability} is not stated explicitly there,  it is discussed on pages 138--139. In \cite{2017RybSta},  the arguments from \cite{1999KarSchSin} were formalized and made more rigorous,  while the assumption $m=\lfloor n^{\alpha}\rfloor$ was retained. In \cite{Dong2023},  an analogous result (Lemma~3.1 therein) was established without imposing the assumption $m=\lfloor n^{\alpha}\rfloor$. However,  the result is meaningful only in the regime $mp^2\to 0$ as $n\to\infty$.

Both assumptions,  namely $m=\lfloor n^{\alpha}\rfloor$ and $mp^2\to 0$ as $n\to\infty$,  fail to cover parameter regimes that may be of interest. Recall that $p=\sqrt{(2\ln n+c_n)/m}$ is the threshold for $\Gnmp$ to be a complete graph (see \cite{1999KarSchSin}). More precisely,  if $c_n\to\infty$,  then $\Gnmp$ is a complete graph with probability tending to $1$,  whereas if $c_n\to-\infty$,  then with probability tending to $1$ as $n\to\infty$,  there exist at least two vertices that are not adjacent. Therefore in the study of $\Gnmp$ one concentrate on the values $p\le \sqrt{(2\ln n)/m}$.
If $m=\lfloor n^{\alpha}\rfloor$ for some constant $\alpha>0$,  then,  for $p\le \sqrt{(2\ln n)/m}$,  we have $mp^3\to 0$ as $n\to\infty$. However,  if $m=m(n)\to\infty$ sufficiently slowly,  this is no longer necessarily the case. In fact,  it is possible to have $mp^3\to\infty$ even when $p$ remains below the threshold for $\Gnmp$ to be a complete graph.

Our approach allows us to derive a general result without imposing any of the aforementioned assumptions. Moreover,  we obtain sharp bounds on the corresponding error terms,  which were not provided in \cite{1999KarSchSin, 2017RybSta, Dong2023}. Last but not least,  our approach provides a general framework for the analysis and comparison of clique covers. We establish asymptotic results and estimates for clique--cover probabilities,  as well as for certain important families of such probabilities. Building on these results,  we may develop tools that enable highly accurate comparisons between clique covers with similar structures. Examples illustrating applications of these results are provided in Section~\ref{Sec:Examples}.

To prove our main theorem,  we introduce an entirely new proof method,  which allows us to substantially weaken the assumptions.


In the article we   use the following asymptotic notation:\\
 $a_n=o(b_n) \Leftrightarrow $ $a_n/b_n\to 0$  as $n\to \infty$,   \\ 
 $a_n=O(b_n) \Leftrightarrow $ $|a_n|\le Cb_n$  for some $C>0$ and all  $n\in \N$, \\  
 $a_n\lesssim b_n \text{ or }b_n\gtrsim a_n\Leftrightarrow   a_n=O(b_n)$ and $a_n,     b_n>0$,   \\ 
 $a_n\asymp b_n \Leftrightarrow   a_n\lesssim b_n$ and $ b_n\lesssim a_n$.

We also use standard notation from graph theory.  For a graph $G=(V(G),     E(G))$ we denote by $V(G)$ its vertex set and by $E(G)$ its edge set. 
For $V'\subseteq V(G)$ and $E'=\{e\in E(G): e\subseteq V'\}$,     $G[V']=(V',  E')$ is the subgraph of $G$ induced on the vertex set $V'$. 
Furthermore,  
$G\cup G'=(V(G)\cup V(G'),     E(G)\cup E(G'))$. 

\section{Main result}

The probability of existence of any edge is $\Gnmp$ given by
\begin{align*}
\pp=1-(1-p^2)^m\asymp 1\wedge (mp^2).
\end{align*}

A key tool in determining subgraph probabilities in $\Gnmp$ is the notion of clique covers, first introduced in \cite{1999KarSchSin}. In the present paper,  we follow the same  approach,  but extend it to a more general setting. In particular,  rather than estimating the probability associated with a specific clique cover,  we establish estimates for the probabilities associated with certain classes of clique covers.

Let us now introduce a precise definition of a clique cover.
Let $H$ be a fixed graph,  and define $\calP(H)$ to be the collection of all subsets of $V(H)$ that contain both endpoints of at least one edge of $H$. We say that a family $\cover\subseteq\calP(H)$ is a {\sl clique cover} of $H$ if each edge $e\in E(H)$ is contained in some $C\in\cover$. The collection of all clique covers of $H$ will be denoted by $\covers(H)$.

Suppose that $H$ is a graph with $V(H)\subseteq\V$. For an attribute $a\in\A$,  consider the set of vertices of $H$ that choose $a$. If this set is precisely $C\subseteq V(H)$,  we say that $a$ {\sl builds} $C$. Thus, 
\begin{align}\label{eq:p_C} \Pra{a\text{ builds } C}=:p_C=p^{|C|}(1-p)^{|V(H)|-|C|}. 
\end{align} 
Given a fixed graph 
$H$ with $V(G)\subseteq \V$, $H$ is said to be {\sl given by a clique cover} $\cover\subseteq\calP(H)$ in $\Gnmp$ if the subsets of $V(H)$ belonging to $\calP(H)$ that are built by attributes are precisely the members of $\cover$. In other words,  every $C\in\cover$ is built by at least one attribute,  whereas no $C\in\calP(H)\setminus\cover$ is built by any attribute. 

By $\pi(H, \cover)$ we denote the probability that $H$ is given by the clique cover $\cover$ in $\Gnmp$. More generally,  for any family $\covers\subseteq\covers(H)$,  we write
\begin{equation}\label{Eq:SubgraphProbabilityCCfamily}
	\pi(H,   \covers):=\sum_{\cover\in \covers} \pi(H,   \cover). 
\end{equation}for the probability that $H$ is given by a clique cover belonging to $\covers\subseteq\covers(H)$

In particular,   the probability that $H$ is a subgraph of $\Gnmp$ is given by
\begin{align}\label{Eq:SubgraphProb}
	\pi(H):=\Pra{H\subseteq \Gnmp}=\pi(H,   \covers(H))
	=\sum_{\cover\in\covers(H)}\pi(H,   \cover) 
\end{align}

Next,  we introduce certain families of clique covers . These families of clique covers provide a useful tool for comparing subgraph probabilities in $\Gnmp$. Furthermore, working with entire families of clique covers, rather than individual clique covers, considerably simplifies the derivation of subgraph probabilities in $\Gnmp$. We illustrate this approach in Example~\ref{Ex:Var}. For a clique cover
$\cover_+\subseteq \calP (H)$ and a set
$\cover_-\subseteq \calP(H)\setminus \cover_+$
we denote by  $\covers(H,   \cover_+,   \cover_-)$ the family of  all the clique covers $\cover$ containing all sets from  $\cover_+$ and no set from $\cover_-$,    i.e.  such that $\cover_+\subseteq \cover\subseteq\calP(H)\setminus \cover_-$. The sets from $\cover_-$ will be called {\sl forbidden}. We will be using the shorthand notation
\begin{equation}\label{Eq:SubgraphProbabilityCCfamily2}
	\pi(H,   \cover_+,   \cover_-):=\pi(H,   \covers(H,   \cover_+,   \cover_-))=\sum_{\cover\in \covers(H,   \cover_+,   \cover_-)} \pi(H,   \cover). 
\end{equation}
As a special case,  we have $\pi(H,   \cover)=\pi(H,   \cover,   \calP(H)\setminus\cover)$.

In Theorem \ref{Lem:CCprobability},  we present a general result concerning the asymptotic behaviour and estimates of the probabilities $\pi(H, \cover_+, \cover_-)$.

\begin{theorem}\label{Lem:CCprobability}
	Let $H$ be a graph and   $\calP(H)$  be the family of subsets of $V(H)$ containing both ends of at least one edge of $H$. Fix a clique cover  $\cover_+$   and a set $\cover_-\subseteq \calP(H)\setminus \cover_+$.
	If  $m\to \infty$ and $p$ is bounded away from $1$ by a constant,   then 
	\begin{align}\label{eq:CC1}
		\pi(H,  \cover_+,   \cover_-)=\left(1+O(m^{-\delta_{|V(H)|}})\right)  \left(1-\sum_{C\in \cover_-}p_C\right)^{m}\prod_{C\in\cover_+}(1-e^{-mp_C}), 
	\end{align}
	where $p_C$ is defined in \eqref{eq:p_C},  and $\delta_{k}=1/({k2^{k+1}})$. 
\end{theorem}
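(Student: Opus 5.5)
The plan is to reduce the statement to an exact multinomial computation and then estimate it scale by scale. Each attribute $a\in\A$ builds exactly one subset of $V(H)$, possibly empty, a singleton, or a set outside $\calP(H)$. Different attributes act independently. Hence the vector $(N_C)_{C\subseteq V(H)}$, where $N_C$ is the number of attributes building $C$, is multinomial with $m$ trials and cell probabilities $p_C$ from \eqref{eq:p_C}. The event that $H$ is given by some $\cover\in\covers(H,\cover_+,\cover_-)$ is exactly
\[
\{N_C=0 \text{ for } C\in\cover_-\}\cap\{N_C\ge1 \text{ for } C\in\cover_+\},
\]
and sets in $\calP(H)\setminus(\cover_+\cup\cover_-)$ are unrestricted. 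Put $q=\sum_{C\in\cover_-}p_C$. Then $\pr(N_C=0,\ C\in\cover_-)=(1-q)^m$, which is the first factor of \eqref{eq:CC1}. Conditionally on this event, the remaining counts are multinomial with $m$ trials and probabilities $p'_C=p_C/(1-q)$. Since $p$ is bounded away from $1$, we have $p'_C=p_C(1+O(p^2))$, and $q\le 1-c$ for some constant $c>0$. It therefore suffices to show
\[
\pr\Bigl(N_C\ge1\ \forall C\in\cover_+\Bigr)=\bigl(1+O(m^{-\delta_k})\bigr)\prod_{C\in\cover_+}\bigl(1-e^{-mp_C}\bigr),\qquad k=|V(H)|,
\]
under the conditional law. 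One could expand the left side by inclusion--exclusion over $S\subseteq\cover_+$, but the signed terms may cancel heavily when some $mp_C$ are tiny. For that reason I plan to aim for a \emph{relative} error via a scale decomposition rather than through that expansion.

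\textbf{Upper bound.} Multinomial counts are negatively associated, so
\[
\pr\Bigl(\bigcap_{C\in\cover_+}\{N_C\ge1\}\Bigr)\le\prod_{C\in\cover_+}\bigl(1-(1-p'_C)^m\bigr).
\]
Then I compare $1-(1-p'_C)^m$ with $1-e^{-mp_C}$. When $mp_C\le1$ both are $mp_C(1+O(mp_C^2+mp_C))$; when $mp_C\ge 1$ their difference is $O(mp_C^2+mp_C p^2)e^{-mp_C(1-o(1))}$, which is relatively negligible. In both cases the error must still be made polynomially small in $m$, and this forces the case analysis below.

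\textbf{Lower bound and the scale split.} The $2^k$ numbers $\log_m(mp_C)$ lie on the real line. By pigeonhole over about $2^{k}$ disjoint windows of width of order $1/(k2^{k})$, I choose a threshold $\eta>0$ such that no $C$ has $mp_C\in[m^{\eta},m^{2\eta}]$, and symmetrically for $m^{-\eta}$. Presumably this is the origin of $\delta_k=1/(k2^{k+1})$. This splits $\cover_+$ into three classes.

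\emph{Large sets} have $mp_C\ge m^{2\eta}$. For them the target factor is $1-O(e^{-m^{2\eta}})$. Conditionally on any configuration of the other cells using at most $m/2$ attributes, the event $N_C=0$ still has probability at most $e^{-cm^{2\eta}}$. This is negligible relative to the remaining product, whose factors are at least of order $\min(1,mp_C)\ge p^{k}$ up to constants. When $p$ is tiny I will exploit that the other cells use few attributes with overwhelming probability.

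\emph{Small sets} have $mp_C\le m^{-\eta}$. Here $\{N_C\ge1\}$ essentially means that exactly one attribute builds $C$. Counting ordered choices of distinct attributes for the small sets gives
\[
\prod(mp'_C)\bigl(1+O(m^{-\eta})\bigr),
\]
and this matches $\prod(1-e^{-mp_C})$ to the same relative error.

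\emph{Middle sets} have $m^{-\eta}<mp_C<m^{\eta}$. Then $p_C\le m^{\eta-1}$, and a Poisson approximation of the multinomial restricted to these at most $2^k$ cells, given the small-set attributes, has total-variation error $O(m p_C^2)\le m^{2\eta-1}$. Dividing by the product of probabilities, which is at least $m^{-C\eta}$, still leaves a relative error that is a negative power of $m$. For independent Poissons the probability is exactly $\prod(1-e^{-mp'_C})$.

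The main obstacle is making all these relative errors simultaneously $O(m^{-\delta_k})$ when the scales of the $p_C$ can be arbitrary. Both the absolute Poisson error and the losses from the large sets must be divided by a product that may be very small. I would try first to handle this by conditioning on the small-set attributes, which removes them exactly, and by choosing the gap via pigeonhole. The remaining work is bookkeeping to confirm that the exponent $1/(k2^{k+1})$ suffices.
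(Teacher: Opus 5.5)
Your plan is sound in outline, but it takes a genuinely different route from the paper. The paper uses no conditioning and no correlation inequality. By inclusion--exclusion and the multinomial theorem it writes $\pi(H,\cover_+,\cover_-)$ exactly as the alternating sum $\sum_{L\subseteq\cover_+}(-1)^{|L|}\bigl(1-\sum_{C\in\cover_-\cup L}p_C\bigr)^m$. It then splits $\cover_+$ at the \emph{fixed} thresholds $\varepsilon=m^{-\delta_k}$ and $\omega=m^{1/(2(k+2))}$, where $k=|V(H)|$. The middle/large part of the alternating sum is controlled by an absolute error $O(\omega^{(k+2)/k}m^{-2/k})$. This uses that a middle set forces $p\lesssim(\omega/m)^{1/k}$, so $p_C\le p^2$ is small for every $C$, forbidden ones included. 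That error is then divided by $\prod_{\mathrm{mid}}(1-e^{-mp_C})\gtrsim\varepsilon^{2^k}=m^{-1/(2k)}$. Small sets get the exactly-one-attribute lower bound, as in your plan, and a ``greedy'' factorisation of the multinomial coefficient for the upper bound. The resulting $O(\varepsilon)$ loss is where $\delta_k$ comes from: $\varepsilon$ was chosen just small enough that $\varepsilon^{2^k}$ still beats the middle-range error.

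You instead remove $\cover_-$ exactly by conditioning and get the whole upper bound in one stroke from negative association of multinomial counts. In the middle range you replace the inclusion--exclusion estimate by a Poisson total-variation bound $O\bigl(m(\sum_{\mathrm{mid}}p_C)^2\bigr)$. This buys something real. The forbidden-set correction becomes a factorwise relative perturbation $O(p^2)$, rather than part of an absolute error that must be divided by $\varepsilon^{2^k}$. So your middle error is only about $\omega^2m^{-1}\varepsilon^{-2^k}$, and you could even afford a larger $\varepsilon$, giving an exponent of order $2^{-k}$. The paper's computation, in return, is completely elementary and self-contained.

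Two points need fixing.

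First, the pigeonhole gap is unnecessary, since your own estimates already cope with sets lying near a threshold. It is also not the source of $\delta_k$. If the windows were literally of the form $[m^\eta,m^{2\eta}]$, making $2^k+1$ of them disjoint would push $\eta$ down to about $2^{-2^k}$ and spoil the exponent. Simply fix the thresholds at, say, $m^{-\delta_k}$ and $m^{\delta_k}$.

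Second, in the upper bound, ``both are $mp_C(1+O(mp_C))$'' does not give a ratio $1+o(1)$ when $mp_C\asymp1$. Instead:
\begin{itemize}
\item use $(1-p'_C)^m\ge e^{-mp_C(1+\theta)}$ with $\theta=(q+p_C)/(1-q-p_C)=O(p^2)$;
\item combine it with $1-e^{-(1+\theta)x}\le(1-e^{-x})\bigl(1+\theta x/(e^x-1)\bigr)$;
\item note that $\theta$ is polynomially small unless every $mp_C$ is polynomially large, and in that case $x/(e^x-1)$ is negligible.
\end{itemize}
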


Importantly,  when $mp^3\le 1$,  the behaviour of described above probabilities is governed by the cardinalities of $\cover_+$,  $\cover_-$,  and certain of their subfamilies. This dependence is made precise in the following corollary. 
Let us note that the assumption $mp^3\le 1$ is not very restricting,  as for $mp^3\ge 1$
$$\pp = 1-\left(1-p^2\right)^m \ge 1-(1-m^{-2/3})^m=1-e^{-m^{1/3}+O(m^{-1/3})}=1-o(1)$$
for $m\to\infty$. On the other hand,  as mentioned in the introduction,  when $m$ tends slowly to infinity,  values $p\ge m^{-1/3}$ may still be of interest,  provided that $p=m^{-1/3}$ lies below the threshold for $\Gnmp$ to be a clique,  i.e. $m^{-1/3}=o((\ln n/m)^{1/2})$.

\begin{corollary}\label{Cor:CCprobability} Under conditions of Lemma~\ref{Lem:CCprobability}, 
	if $mp^3\le 1$ and $m>m_0$ for some $m_0\in\N$,  the following estimates hold true 
	\begin{align}\label{Eq:Cor1}
		\pi(H,  \cover_+,   \cover_-)&=\left(1+O(m^{-\delta_{|V(H)|}}+mp^3)\right)
		  \pp^{|\cover_+^{(2)}|}(1-\pp)^{|\cover_-^{(2)}|}\prod_{C\in\cover_+\setminus \cover_+^{(2)}}mp^{|C|}, 
	\end{align}
	where $\cover_+^{(2)}$ and $\cover_-^{(2)}$ are the families of sets of cardinality $2$ in $\cover_+$ and $\cover_-$,   respectively,  and $\delta_{k}=1/({k2^{k+1}})$.  In particular, for $mp^2\le1$ we have
	\begin{align}\label{Eq:Cor3}
	\pi(H,   \cover_+,    \cover_-)&\asymp \pi(H,   \cover_+)\asymp \prod_{C\in\cover_+}mp^{|C|}.
	\end{align}
\end{corollary}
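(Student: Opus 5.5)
We derive the corollary directly from Theorem~\ref{Lem:CCprobability}: we compare each factor on the right-hand side of \eqref{eq:CC1} with the corresponding factor in \eqref{Eq:Cor1}, always using $mp^3\le 1$ and $|V(H)|$ fixed. Throughout, $p\le m^{-1/3}\to 0$, so $(1-p)^{|V(H)|-|C|}=1+O(p)$. Since $p\le m^{-1/3}$, we also have $p=O(m^{-1/3})=O(m^{-\delta_{|V(H)|}})$, so $O(p)$ errors are absorbed.

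\emph{Sets of size at least three in $\cover_+$.} For $|C|\ge3$ we have $mp_C\le mp^3\le 1$. Hence
\begin{align*}
1-e^{-mp_C}=mp_C\,(1+O(mp_C))=mp^{|C|}\,(1+O(p)+O(mp^3)).
\end{align*}
This produces the factor $\prod_{C\in\cover_+\setminus\cover_+^{(2)}}mp^{|C|}$.

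\emph{Sets of size two in $\cover_+$.} For $|C|=2$, write $p_C=p^2(1-p)^{k-2}$ with $k=|V(H)|$. We must compare $1-e^{-mp_C}$ with $\pp=1-(1-p^2)^m$. We have
\begin{align*}
(1-p^2)^m=e^{-mp^2+O(mp^4)},\qquad e^{-mp_C}=e^{-mp^2+O(mp^3)}.
\end{align*}
So both quantities have the form $1-e^{-x}$ with arguments $x=mp^2$ and $x'=x+O(mp^3)$; in fact $x-x'\asymp (k-2)mp^3$. Because $x\mapsto 1-e^{-x}$ is increasing and concave with $(1-e^{-x})'/(1-e^{-x})\le 1/x\cdot O(1)$, we get $1-e^{-x'}=(1-e^{-x})(1+O(|x-x'|/x))$ when $x\lesssim 1$. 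When $x\gtrsim1$, the derivative is bounded by $e^{-x}\le 1$ and $1-e^{-x}\asymp 1$. In either case the relative error is $O(|x-x'|\wedge |x-x'|/x)=O(p)+O(mp^3)$, which is what we need.

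\emph{The $\cover_-$ factor.} Split $\sum_{C\in\cover_-}p_C=\Sigma_2+\Sigma_{\ge3}$, where $\Sigma_2=|\cover_-^{(2)}|\,p^2(1-p)^{k-2}$ and $\Sigma_{\ge3}=O(p^3)$. Then
\begin{align*}
\bigl(1-\Sigma_2-\Sigma_{\ge3}\bigr)^m=\exp\bigl(-m\Sigma_2+O(mp^3)+O(mp^4)\bigr)=e^{-m\Sigma_2}(1+O(mp^3)).
\end{align*}
We compare this with $(1-\pp)^{|\cover_-^{(2)}|}=(1-p^2)^{m|\cover_-^{(2)}|}=\exp\bigl(-|\cover_-^{(2)}|mp^2+O(mp^4)\bigr)$. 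The two exponents differ by $|\cover_-^{(2)}|\,mp^2\,(1-(1-p)^{k-2})=O(mp^3)$. Since $mp^3\le1$, this gives a factor $1+O(mp^3)$.

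The most delicate step is this last one. The absolute exponent error there must be controlled by $mp^3$ itself, not merely relative to $mp^2$, because $mp^2$ may tend to infinity. It works precisely because $(1-p)^{k-2}-1=O(p)$. The same care is needed in the size-two comparison in the regime $mp^2\to\infty$. Taking $m_0$ large ensures that $p$ is small and that all the expansions above are valid.

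\emph{Proof of \eqref{Eq:Cor3}.} When $mp^2\le1$, we have $\pp\asymp mp^2$ and $1-\pp\ge 1-(1-e^{-1})$, which is bounded below. The error terms in \eqref{Eq:Cor1} are $O(1)$. Hence $\pi(H,\cover_+,\cover_-)\asymp\prod_{C\in\cover_+}mp^{|C|}$. The case $\cover_-=\emptyset$ gives the same bound for $\pi(H,\cover_+)$.
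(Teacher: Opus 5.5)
Your proposal is correct and is essentially the paper's argument: you compare \eqref{eq:CC1} factor by factor. The only nontrivial pieces are the size-two factors, with relative error of order $mp^3/(1\wedge mp^2)+p\lesssim mp^3+p$ (the paper gets this by writing $1-e^{-mp_C}=\pp+O(mp^3)$ and dividing by $\pp$), and the $\cover_-$ factor, with an $O(mp^3)$ discrepancy in the exponent. Two small slips need fixing. First, the relative error in your size-two step is $O\bigl(|x-x'|/(x\wedge 1)\bigr)$, a maximum rather than the minimum you wrote, although your final bound $O(p)+O(mp^3)$ is right. Second, for \eqref{Eq:Cor3} an $O(1)$ error does not give the lower bound in $\asymp$: you need it to be $o(1)$, which holds because $mp^2\le 1$ forces $mp^3\le m^{-1/2}$. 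Similarly, $1-\pp=(1-p^2)^m\ge e^{-mp^2/(1-p^2)}$ is bounded below, but not by exactly $e^{-1}$.
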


The next proposition will allow us to compare the probabilities of graphs with similar,  though not identical,  structures.
Let $\cover$ and $\cover'$ be two families of sets. We write $\cover\Bumpeq\cover'$  if  there is a bijection $f:\cover\rightarrow \cover'$ such that $|f(C)|=|C|$ for any $C\in \cover$. {Applications will be shown in Examples~\ref{Ex:Var} and \ref{Ex:Star:Path}.}
\begin{proposition}\label{Lem:CCequality}
	Let $H_1$ and $H_2$ be two graphs with the same number of vertices. Let,    for $i=1,   2$,    $\cover_+^{(i)}$ be a clique cover of $H_i$ and 
	$\cover_-^{(i)}\subseteq \calP(H_i)\setminus \cover_+^{(i)}$. If $\cover_+^{(1)}\Bumpeq\cover_+^{(2)}$  and $\cover_-^{(1)}\Bumpeq\cover_-^{(2)}$,  then
	$$
	\pi(H_1,   \cover_+^{(1)},   \cover_-^{(1)})=\pi(H_2,  \cover_+^{(2)},   \cover_-^{(2)}).
	$$
\end{proposition}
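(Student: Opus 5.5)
The plan is to derive an exact closed formula for $\pi(H,\cover_+,\cover_-)$ that depends on the families $\cover_+$ and $\cover_-$ only through the multisets of cardinalities of their members and through $|V(H)|$. Once we have it, the proposition follows immediately.

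\textbf{Step 1 (reformulation).} By the definition of $\pi(H,\cover_+,\cover_-)$ as a sum over $\cover$ with $\cover_+\subseteq\cover\subseteq\calP(H)\setminus\cover_-$, it is the probability of the event
\[
E:=\{\text{every } C\in\cover_+ \text{ is built by some attribute, and no } C\in\cover_- \text{ is built by any attribute}\}.
\]
Sets in $\calP(H)\setminus(\cover_+\cup\cover_-)$ are unrestricted. Summing over these free sets is exactly what collapses the sum over clique covers into this single event. Since $\cover_+$ is already a clique cover, every such $\cover$ is admissible, so no covering constraint is lost.

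\textbf{Step 2 (inclusion--exclusion).} For $C\in\cover_+$ let $B_C$ be the event that no attribute builds $C$. Then
\[
\pr(E)=\sum_{S\subseteq\cover_+}(-1)^{|S|}\,\pr\Big(\text{no attribute builds any set in } S\cup\cover_-\Big).
\]
For a fixed attribute $a$, the events ``$a$ builds $C$'' for distinct $C\subseteq V(H)$ are disjoint, because $a$ builds exactly one subset of $V(H)$. Also, $S$ and $\cover_-$ are disjoint. Hence a single attribute builds none of the sets in $S\cup\cover_-$ with probability $1-\sum_{C\in S}p_C-\sum_{C\in\cover_-}p_C$. Attributes act independently, which gives
\[
\pi(H,\cover_+,\cover_-)=\sum_{S\subseteq\cover_+}(-1)^{|S|}\Big(1-\sum_{C\in\cover_-}p_C-\sum_{C\in S}p_C\Big)^{m}.
\]

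\textbf{Step 3 (transfer via the bijections).} By \eqref{eq:p_C}, $p_C=p^{|C|}(1-p)^{k-|C|}$ depends only on $|C|$ and on $k=|V(H)|$, and $k$ is the same for $H_1$ and $H_2$. Let $f:\cover_+^{(1)}\to\cover_+^{(2)}$ and $g:\cover_-^{(1)}\to\cover_-^{(2)}$ be size-preserving bijections. Then $\sum_{C\in\cover_-^{(1)}}p_C=\sum_{C\in\cover_-^{(2)}}p_C$. Moreover, $S\mapsto f(S)$ is a bijection between the subsets of $\cover_+^{(1)}$ and those of $\cover_+^{(2)}$ that preserves both $|S|$ and $\sum_{C\in S}p_C$. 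So the two inclusion--exclusion sums agree term by term, and the equality follows.

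The only point requiring care is Step 1. One must check that summing $\pi(H,\cover)$ over the family really equals $\pr(E)$: the events ``$H$ is given by $\cover$'' for distinct $\cover$ are disjoint, and their union over the family is exactly $E$. One must also check the disjointness used in Step 2. Both are direct from the definitions, so I expect no real obstacle.
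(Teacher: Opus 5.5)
Your proof is correct and is essentially the paper's argument. The paper simply observes that the exact formula \eqref{Eq:Pi2} depends only on $|V(H)|$ and on the cardinalities of the sets in $\cover_+$ and $\cover_-$; your Step 2 is its equivalent inclusion--exclusion form \eqref{Eq:PiInclusionExclusion}, and you spell out the reduction to that formula and the transfer through the size-preserving bijections explicitly.
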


In sections~\ref{Sec:Proof1}--\ref{Sec:Proof3} we provide detailed proofs of Theorem~\ref{Lem:CCprobability},  Corollary~\ref{Cor:CCprobability},  and Proposition~\ref{Lem:CCprobability}. We conclude the article with Section~\ref{Sec:Examples},  in which we present several applications of our result and provide examples that demonstrate its significance.

\subsection{Proof of Theorem \ref{Lem:CCprobability}}\label{Sec:Proof1}

Denote $|V(H)|=h$,   $|\calP(H)|=t$,   and $\calP(H)=\{C_1,  C_2,  \ldots,  C_{t}\}$. Then,  let $\Ind,   \Jind\subseteq \{1,  \ldots,   t\}$,   $\Ind\cap\Jind=\emptyset$,   
be such that  $\cover_+=\{C_i: i\in \Ind\}$ is a clique cover of $H$ and $\cover_-=\{C_i: i\in \Jind\}$ is a family of forbidden sets. Furthermore,  put 
\begin{equation}\label{Eq:pi}
	p_i=p_{C_i}=p^{|C_i|}(1-p)^{h-|C_i|},  \text{ for }i\in \{1,  \ldots,  t\}\quad\text{and, }\quad p_0=1-\sum_{i\in\Ind\cup\Jind}p_i.	
\end{equation}
Given a sequence of non negative integers $(a_i)_{i\in \Ind}$,  $\Ind\subseteq \{1,  \ldots,  t\}$,    we set $$a_0=m-\sum_{i\in \Ind}a_i, $$
and denote by  
\begin{align*} 
	\{(a_i,  i\in \Ind)^{\ge0}\}\quad&\text{ all sequences } (a_i)_{i\in\Ind}\text{ such that }\quad a_i\ge 0,   i\in \Ind\cup\{0\};\\
	\{(a_i,  i\in \Ind)^{\ge1}\}\quad&\text{ all sequences } (a_i)_{i\in\Ind}\text{ such that }\quad a_i\ge 1,   i\in \Ind\text{ and }  a_0\ge 0.
\end{align*}
Furthermore,  for any such a sequence we put
\begin{align*}
	\binom{m}{a_i,  i\in \Ind}=\frac{m!}{(1-\sum_{j\in \ind}a_j)!\prod_{i\in \Ind}a_i!}=\frac{m!}{\prod_{i\in \Ind\cup\{0\}}a_i!}.
\end{align*}
Additionally,  we  decompose
$\Ind=\Ind_1\dot\cup\Ind_2\dot\cup\Ind_3$,   where 
\begin{align*}
	\Ind_1&=\{i\in\Ind:mp_i\le  \varepsilon\}, &	
	\Ind_2&=\{i\in\Ind:\varepsilon<mp_i<\omega\}, &
	\Ind_3&=\{i\in\Ind:mp_i\ge \omega\},  
\end{align*}
with
\begin{align*}
	\varepsilon&=\varepsilon(h,  m)=m^{-\frac{1}{h2^{h+1}}},  &
	\omega&=\omega(h,  m)=m^{\frac{1}{2(h+2)}}.
\end{align*}

We are now prepared to determine  two formulae for $\pi(H,    \cover_+,   \cover_- )$. Let $M_0$ be the number of attributes from $\A$  that build none of the sets  $C_i$ in $\Gnmp$,    $i\in \Ind\cup \Jind$,    and let
$M_i$,    $i\in \Ind\cup\Jind$,    be the number of attributes from $\A$  that build~$C_i$ in $\Gnmp$. Recall that by definition \eqref{Eq:pi},   for $i\neq 0$,   $p_i$ is the probability that a given attribute builds $C_i$. Moreover,  $p_0$ is the probability that a given attribute does not build any set from $\cover_+\cup\cover_-$,  as any attribute can only build  one set from $\calP(H)$ at a time. All this allows us to write
\begin{align}\nonumber
	\pi(H,    \cover_+,   \cover_- )&=\sum_{\{(a_i,  i\in \Ind)^{\ge1}\}}\,   \Pra{\{M_0=a_0\}\cap\bigcap_{i\in \Ind}\{M_i=a_i\}\cap \bigcap_{j\in \Jind}\{M_j=0\}}\\\label{Eq:Pi2}
	&=\sum_{\{(a_i,   i\in\Ind)^{\ge 1}\}}\binom{m}{a_i,   i\in\Ind}p_0^{a_0}\prod_{i\in \Ind}p_i^{a_i}.
\end{align}
Next,  define events $A_i=\{M_i\ge 1\}$,  for $i\in \Ind$,  and $B=\{\forall_{j\in \Jind} M_j=0\}$. Moreover,  denote by $A_i'=\{M_i=0\}$ the complement of $A_i$,  $i\in \Ind$.  Therefore,  by
the inclusion-exclusion principle we have 	
\begin{align}\nonumber
	\pi(H,    \cover_+,   \cover_- )
	&=\Pra{\bigcap_{i\in \Ind}A_i\cap B}\\\nonumber
	&=\Pra{B}-\Pra{\bigcup_{i\in \Ind}(A_i'\cap B)}\\\nonumber
	&=\sum_{\Lind\subseteq\Ind}(-1)^{|\Lind|}\Pra{\bigg(\bigcap_{i\in \Lind}A_i'\bigg)\cap B}\\\label{Eq:Pi1}
	&=\sum_{\Lind\subseteq\Ind}(-1)^{|\Lind|}
	\sum_{\substack{\{(a_i,   i\in\Ind)^{\ge0}\}\\   a_i=0 \text{ for }i\in \Lind}}\binom{m}{a_i,   i\in\Ind}p_0^{a_0}\prod_{i\in \Ind}p_i^{a_i},  
\end{align}	
where,  for notational convenience,  we set $\bigcap_{i\in\emptyset}A_i'$ as the whole probability space.
Next,  by the multinomial theorem,   for any $\Lind\subseteq \Ind$ we have
\begin{multline}\label{Eq:SumazLI}
	\sum_{\substack{\{(a_i,   i\in\Ind)^{\ge0}\}\\  a_i=0 \text{ for }i\in \Lind}}\binom{m}{a_i,   i\in\Ind}p_0^{a_0}\prod_{i\in \Ind}p_i^{a_i}
	=
	\sum_{\{(a_i,  {i\in\Ind\setminus\Lind})^{\ge0}\}}\binom{m}{a_i,   i\in\Ind\setminus\Lind}p_0^{a_0}\prod_{i\in \Ind\setminus\Lind}p_i^{a_i}\\
	=\left(p_0+\sum_{i\in\Ind\setminus\Lind}p_i\right)^m
	=\left(1-\sum_{i\in\Jind\cup\Lind}p_i\right)^m.
\end{multline}
We will also consider the sums over $\{(a_i,   i\in\Ind_2\cup \Ind_3)^{\ge0}\}$. For them,   analogously as above,  we obtain
\begin{equation}\label{Eq:SumazLI23}
	\sum_{\substack{\{(a_i,   i\in\Ind_2\cup \Ind_3)^{\ge0}\}\\ a_i=0 \text{ for }i\in \Lind}}\binom{m}{a_i,   i\in\Ind}p_0^{a_0}\prod_{i\in \Ind}p_i^{a_i}
	=\left(p_0+\sum_{i\in\Ind_2\cup\Ind_3\setminus\Lind}p_i\right)^m
	=\left(1-\sum_{i\in\Jind\cup\Ind_1\cup\Lind}p_i\right)^m.
\end{equation}
Therefore,   substituting \eqref{Eq:SumazLI} to \eqref{Eq:Pi1}  we get
\begin{equation}\label{Eq:PiInclusionExclusion}
	\pi(H,   \cover_+,   \cover_-)
	=\sum_{\Lind\subseteq\Ind}(-1)^{|\Lind|}\left(1-\sum_{i\in\Jind\cup\Lind}p_i\right)^m.
\end{equation}
Similarly,   by the inclusion--exclusion principle and \eqref{Eq:SumazLI23}
\begin{equation}\label{Eq:PiInclusionExclusion2}
	\begin{split}
		&\sum_{\{(a_i,   i\in\Ind_2\cup \Ind_3)^{\ge1}\}}\binom{m}{a_i,   i\in\Ind}p_0^{a_0}\prod_{i\in \Ind}p_i^{a_i}\\
		&=\sum_{\Lind\subseteq\Ind_2\cup \Ind_3}(-1)^{|\Lind|}
		\sum_{\substack{\{(a_i,   i\in\Ind_2\cup \Ind_3)^{\ge0}\}\\  a_i=0 \text{ for }i\in \Lind}}\binom{m}{a_i,   i\in\Ind}p_0^{a_0}\prod_{i\in \Ind}p_i^{a_i}\\
		&=\sum_{\Lind\subseteq\Ind_2\cup\Ind_3}(-1)^{|\Lind|}\left(1-\sum_{i\in\Jind\cup\Ind_1\cup\Lind}p_i\right)^m.
	\end{split}
\end{equation}
Furthermore,  let us observe that for $\Lind\subseteq\Ind$ such that $\Lind\cap \Ind_3\neq \emptyset$  (i.e. when $mp_j\ge \omega$ for some $j\in\Lind$) we have	
\begin{equation}
	\label{Eq:Duzempibis}
	\begin{split}
		\left(1-\sum_{i\in\Jind\cup\Lind} p_i\right)^m
		&=\left(1-\sum_{i\in\Jind}p_i\right)^m
		\left(1-\frac{\sum_{j\in\Lind}p_j}{1-\sum_{i\in\Jind}p_i}\right)^m\\
		&\le \left(1-\sum_{i\in\Jind}p_i\right)^m\left(1-\sum_{j\in\Lind}p_j\right)^m\\
		&\le\left(1-\sum_{i\in\Jind}p_i\right)^me^{-\omega}.\\
	\end{split}
\end{equation}
In the last line we have used the fact that $(1-x-y)^m\le e^{-mx-my}\le e^{-mx}$ for $0\le x+y<1$,    $x,  y\ge 0$. 
\medskip

Now,  let us return to the analysis of \eqref{Eq:PiInclusionExclusion}. In the following,  we will demonstrate that
\begin{equation}\label{Eq:Serduszko}
	\sum_{\Lind\subseteq\Ind_2\cup \Ind_3}(-1)^{|\Lind|}\left(1-\sum_{i\in\Jind\cup\Lind}p_i\right)^m =  \left(1-\sum_{i\in\Jind}p_i\right)^m\prod_{i\in\Ind_2\cup\Ind_3}(1-e^{-mp_i})
	\left(
	1+O(m^{-\frac1h})
	\right).	
\end{equation}
In all calculations we follow the convention that any product $\prod_{i\in \emptyset}$ over an empty set equals~$1$ and sum $\sum_{i\in \emptyset}$ over an empty set equals~$0$. Then all the following calculations hold true,  even when $\Ind_2=\emptyset$. We start with considering the sum over the subsets of $\Ind_2$ only;
\begin{align*}
	&\sum_{\Lind\subseteq\Ind_2}(-1)^{|\Lind|}\left(1-\sum_{i\in\Jind\cup\Lind}p_i\right)^m=\\
	&=\left(1-\sum_{i\in\Jind}p_i\right)^m
	\left(
	\sum_{\Lind\subseteq\Ind_2}(-1)^{|\Lind|}\left(1-\frac{\sum_{i\in \Lind}p_i}{1-\sum_{i\in\Jind}p_i}\right)^m
	\right)\\
	&=\left(1-\sum_{i\in\Jind}p_i\right)^m
	\Bigg[
	\sum_{\Lind\subseteq\Ind_2}(-1)^{|\Lind|}\exp\Big(-\sum_{i\in\Lind}mp_i\Big)\\
	&\hspace{1cm}+\sum_{\Lind\subseteq\Ind_2}(-1)^{|\Lind|}\left(\left(1-\frac{\sum_{i\in \Lind}p_i}{1-\sum_{i\in\Jind}p_i}\right)^m-\exp\Big(-\sum_{i\in\Lind}mp_i\Big)\right)
	\Bigg]\\
	&=\left(1-\sum_{i\in\Jind}p_i\right)^m\prod_{i\in\Ind_2}(1-e^{-mp_i})
	\left(
	1+\text{Er}_1
	\right),  
\end{align*}
where the error term $\text{Er}_1$ equals
$$\text{Er}_1=\frac{\sum_{\emptyset\subsetneq \Lind\subseteq\Ind_2}(-1)^{|\Lind|}\[\left(1-\frac{\sum_{i\in \Lind}p_i}{1-\sum_{i\in\Jind}p_i}\right)^m-\exp\left(-\sum_{i\in\Lind}mp_i\right)\]}{\prod_{i\in\Ind_2}(1-e^{-mp_i})}.$$	
Obviously $\text{Er}_1=0$ for $\Ind_2=\emptyset$. Before we proceed with technical details of estimating $\text{Er}_1$ for $\Ind_2\neq \emptyset$,   we return to \eqref{Eq:Serduszko}. By \eqref{Eq:Duzempibis} and the above equation
\begin{equation}\label{Eq:SumaInd23jeden}
	\begin{split}
		&\sum_{\Lind\subseteq\Ind_2\cup \Ind_3}(-1)^{|\Lind|}\left(1-\sum_{i\in\Jind\cup\Lind}p_i\right)^m=\\
		&=\sum_{\Lind\subseteq\Ind_2}(-1)^{|\Lind|}\left(1-\sum_{i\in\Jind\cup\Lind}p_i\right)^m
		+\sum_{\Lind\subseteq\Ind_2\cup \Ind_3,   \Lind\cap\Ind_3\neq\emptyset}(-1)^{|\Lind|}\left(1-\sum_{i\in\Jind\cup\Lind}p_i\right)^m\\
		&\le \left(1-\sum_{i\in\Jind}p_i\right)^m\prod_{i\in\Ind_2}(1-e^{-mp_i})
		\left(
		1+\text{Er}_1
		\right) + 2^h\left(1-\sum_{i\in\Jind}p_i\right)^me^{-\omega}
		\\
		&= \left(1-\sum_{i\in\Jind}p_i\right)^m\prod_{i\in\Ind_2}(1-e^{-mp_i})
		\left(
		1+\text{Er}_1+\text{Er}_2
		\right),   
	\end{split}
\end{equation}
where 
$$
\text{Er}_2=\frac{2^he^{-\omega}}{\prod_{i\in\Ind_2}(1-e^{-mp_i})}.
$$
Similarly we get the lower bound
\begin{equation}\label{Eq:SumaInd23dwa}
	\sum_{\Lind\subseteq\Ind_2\cup \Ind_3}(-1)^{|\Lind|}\left(1-\sum_{i\in\Jind\cup\Lind}p_i\right)^m\ge  \left(1-\sum_{i\in\Jind}p_i\right)^m\prod_{i\in\Ind_2}(1-e^{-mp_i})
	\left(
	1+\text{Er}_1-\text{Er}_2
	\right).
\end{equation}

Now we estimate the value of $|\text{Er}_1|$ for $\Ind_2\neq \emptyset$. Since $\Ind_2\neq \emptyset$,   there exists $i_0\in \Ind_2$ for which,   by definition of $\Ind_2$,   we have $mp_{i_0}\le \omega$,   i.e. $p_{i_0}\le \omega/m$.  Since we assume $p<1-c$,   the definition \eqref{Eq:pi} of $p_i $ gives us $p_{i_0}\ge p^h(1-p)^h$ and
\begin{equation}\label{Eq:pipi0omega}
	\forall_{i\in \Ind\cup \Jind}\quad  p_i\le p^2\le c^{-2}(1-p)^2p^2 = c^{-2}(p^h(1-p)^h)^{2/h}\le c^{-2}p_{i_0}^{\frac{2}{h}}\le c^{-2}\left(\frac{\omega}{m}\right)^{\frac{2}{h}}.
\end{equation}
Denote
$$
x=\sum_{i\in \Lind}p_i\quad\text{ and }\quad y=\sum_{i\in\Jind}p_i.
$$
By the definition of $\Ind_2$,   \eqref{Eq:pipi0omega},   and a trivial bound $|\Ind\cup \Jind|\le 2^h$,   we get for $\emptyset\subsetneq \Lind\subseteq \Ind_2$ 
\begin{equation}\label{Eq:mxixplusy}
	mx=\sum_{i\in\Lind}mp_i\le 2^h\omega\quad \text{ and }\quad x+y=\sum_{i\in\Lind\cup\Jind}p_i\le 2^hc^{-2}\omega^{\frac{2}{h}}m^{-\frac{2}{h}}.
\end{equation}
Moreover,   as the empty set is neither in a clique cover nor a forbidden set
\begin{equation}\label{Eq:1minusxy}
	1-x-y \ge (1-p)^h \ge c^{h}.
\end{equation}
We use the facts that $e^{-u/(1-u)}\le 1-u$,   for $u\in(0,  1)$,   and $1-u\le e^{-u}$ for any $u$. Moreover $0<x/(1-y)<1$ as $x+y<1$,   $x>0$,   and $y < 1$ (for $\Lind\neq \emptyset$). Therefore
\begin{align*}
	\exp(-mx)\geq \left(1-\frac{x}{1-y}\right)^m&\geq \exp\left({-\frac{mx}{(1-x-y)}}\right)\\
	&=\exp\left({-mx}\right)\exp\left({-\frac{mx(x+y)}{1-x-y}}\right)\\
	&\geq \exp(-mx)\left(1-\frac{mx(x+y)}{1-x-y}\right)\\
	&\geq  \exp(-mx)-\frac{2^{2h}\omega^{\frac{h+2}{h}}}{m^{\frac{2}{h}}c^{h+2}}, 
\end{align*}
where in the last line we used \eqref{Eq:mxixplusy} and \eqref{Eq:1minusxy}.

Using this and the inequality $1-e^{-u}\geq u(1-u),   u\geq0$,   we get for $m\geq2$
\begin{align*}
	|\text{Er}_1|&\le \frac{\sum_{\emptyset\subsetneq\Lind\subseteq\Ind_2}\[\exp\left(-\sum_{i\in\Lind}mp_i\right)-\left(1-\frac{\sum_{i\in \Lind}p_i}{1-\sum_{i\in\Jind}p_i}\right)^m\]}{\prod_{i\in\Ind_2}(1-e^{-mp_i})}\\[5pt]
	&\le \frac{2^h}{(1-e^{-\varepsilon})^{|\Ind_2|}}\cdot\frac{2^{2h}\omega^{\frac{h+2}{h}}}{m^{\frac{2}{h}}c^{h+2}}
	\le \frac{2^{3h}}{({1-\varepsilon})^{2^h}c^{h+2}}\cdot\frac{\omega^{\frac{h+2}{h}}}{m^{\frac{2}{h}}\varepsilon^{2^h}}\\
	&\le\frac{2^{3h}}{({1-2^{-1/h2^{h+1}}})^{2^h}c^{h+2}}\,  m^{-\frac1h}=:C_{h,  c}\,  m^{-\frac1h}.
\end{align*}
In addition,   for large $m$,  
\begin{align*}
	\text{Er}_2=\frac{2^he^{-\omega}}{\prod_{i\in\Ind_2}(1-e^{-mp_i})}
	\le \frac{2^h e^{-m^{\frac{1}{2(h+2)}}}}{(1-\varepsilon)^{2^h}\varepsilon^{2^h}}
	\le \frac{2^h}{({1-2^{-1/h2^{h+1}}})^{2^h}}\cdot \frac{e^{-m^{\frac{1}{2(h+2)}}}}{m^{-\frac{1}{2h}}}\le m^{-\frac1h}.
\end{align*}
Moreover,   note that for $i\in \Ind_3$ we have $e^{-mp_i}\le e^{-m^{\frac{1}{2(h+2)}}}\ll m^{-\frac1h}$,  and hence
$$
1=\prod_{i\in\Ind_3}(1-e^{-mp_i})\left(1+o(m^{-\frac1h})\right).
$$
We apply the above equality and the bounds $\text{Er}_1=O(m^{-1/h})$,   $\text{Er}_2=O(m^{-1/h})$ to \eqref{Eq:SumaInd23jeden} and \eqref{Eq:SumaInd23dwa},  which leads to  \eqref{Eq:Serduszko}.
Then \eqref{Eq:Serduszko} combined with \eqref{Eq:PiInclusionExclusion} implies the first assertion of Lemma~\ref{Lem:CCprobability} in the case $\Ind_1=\emptyset$.

Now we focus on the case $\Ind_1\neq \emptyset$.
Let $|\Ind_1|=k\geq 1$. 
Before we start,   we remark that by \eqref{Eq:PiInclusionExclusion2},   if we repeat all the calculations leading to \eqref{Eq:Serduszko},   we get that for any constant $k$ and some $\text{Er}_0=O(m^{-\frac1h})$

\begin{equation}\label{Eq:SumaInd23cztery}
	\begin{split}
		\sum_{\{(a_i,   i\in\Ind_2\cup \Ind_3)^{\ge1}\}}&\binom{m-k}{a_i,   i\in\Ind}p_0^{a_0}\prod_{i\in \Ind}p_i^{a_i}\\
		&=\sum_{\Lind\subseteq\Ind_2\cup \Ind_3}(-1)^{|\Lind|}\left(1-\sum_{i\in\Jind\cup\Ind_1\cup\Lind}p_i\right)^{m-k}\\
		&=\left(1-\sum_{i\in\Jind\cup\Ind_1}p_i\right)^{m-k}\prod_{i\in\Ind_2\cup\Ind_3}(1-e^{-(m-k)p_i})
		\left(
		1+\text{Er}_0
		\right).
	\end{split}
\end{equation}
We start with the lower bound on $\pi(H,   \cover_+,   \cover_-)$. By \eqref{Eq:Pi2}
\begin{equation}\label{Eq:I1lower}
	\begin{split}
		\pi(H,   \cover_+,   \cover_-)
		&=\sum_{\{(a_i,   i\in\Ind)^{\ge 1}\}}\binom{m}{a_i,   i\in\Ind}p_0^{a_0}\prod_{i\in \Ind}p_i^{a_i}\\
		&\ge \sum_{\substack{\{(a_i,   i\in\Ind)^{\ge1}\}\\   a_i=1 \text{ for }i\in \Ind_1}}\binom{m}{a_i,   i\in\Ind}p_0^{a_0}\prod_{i\in \Ind}p_i^{a_i}\\
		&=\left((m)_k\prod_{i\in \Ind_1}p_i\right)\cdot\left( \sum_{\{(a_i,   i\in\Ind_2\cup\Ind_3)^{\ge 1}\}}\binom{m-k}{a_i,   i\in\Ind_2\cup\Ind_3}p_0^{a_0}\prod_{i\in \Ind}p_i^{a_i}\right).
	\end{split}
\end{equation}	
We analyse the first factor of the product from the last line of \eqref{Eq:I1lower}. For large $m$,  
\begin{equation}\label{Eq:I1lowerfirst}
	(m)_k\prod_{i\in \Ind_1}p_i
	\ge \left(1-km^{-1}\right)\prod_{i\in \Ind_1}mp_i
	\ge \left(1-m^{-\frac1h}\right)\prod_{i\in \Ind_1}\left(1-e^{-mp_i}\right).
\end{equation}
The value of the other factor is estimated using \eqref{Eq:SumaInd23cztery}
\begin{align}\label{Eq:I1lowersecond}
	&\sum_{\{(a_i,   i\in\Ind_2\cup\Ind_3)^{\ge 1}\}}\binom{m-k}{a_i,   i\in\Ind_2\cup\Ind_3}p_0^{a_0}\prod_{i\in \Ind}p_i^{a_i}
	=\\ \nonumber
	&=\left(1-\sum_{i\in\Jind\cup\Ind_1}p_i\right)^{m-k}\prod_{i\in\Ind_2\cup\Ind_3}(1-e^{-(m-k)p_i})(1+\text{Er}_0)\\
	&\ge \nonumber
	\left(1-\sum_{i\in\Jind\cup\Ind_1}p_i\right)^{m}
	\prod_{i\in\Ind_2\cup\Ind_3}(1-e^{-mp_i})
	\left(1-\frac{e^{-mp_i}(e^{kp_i}-1)}{1-e^{-mp_i}}\right)(1+\text{Er}_0).
\end{align}
Let us remind that $\Ind_1\neq \emptyset$. Therefore there exists $i_0\in \Ind_1$ for which,   by definition,   $mp_{i_0}\le \varepsilon\le 1$,   i.e. $$p^h(1-p)^h\le p^{|C_{i_0}|}(1-p)^{h-|C_{i_0}|}=p_{i_0}\le m^{-1}.$$ 
Since also $p<1-c$ for some constant $c$ then,  then  we have 
$p\le c^{-1}m^{-\frac1h}$ and 
\begin{equation}\label{Eq:pimale}
	\forall_i\ p_i\le p^2\le c^{-2}m^{-\frac2h} = o(1).
\end{equation} 
Thus,   as $e^u-1\le 2u$ for $u\in (0,  1)$ and $e^u-1\ge u$,   for constant $k$ and large $m$,   we have
\begin{equation}\label{Eq:I1lowersecondbis}
	\frac{e^{-mp_i}(e^{kp_i}-1)}{1-e^{-mp_i}}=\frac{e^{kp_i}-1}{e^{mp_i}-1}\le \frac{2kp_i}{mp_i}\le \frac{k}{m} \ll m^{-\frac1h}.
\end{equation}
Moreover,  using the definition of $\Ind_1$ and \eqref{Eq:pimale}
\begin{equation}\label{Eq:I1lowersecondbisbis}
	\begin{split}
		\frac{\left(1-\sum_{i\in\Jind\cup\Ind_1}p_i\right)^{m}}
		{\left(1-\sum_{i\in\Jind}p_i\right)^m}
		&=\left(
		1-\frac{\sum_{i\in \Ind_1}p_i}{1-\sum_{i\in \Jind}p_i}
		\right)^m\\
		&\ge 1-\frac{\sum_{i\in \Ind_1}mp_i}{1-\sum_{i\in \Jind}p_i}
		\ge 1-\frac{k\varepsilon}{1-2^hc^{-2}m^{-\frac2h}}=1-O\left(m^{-\frac{1}{h2^{h+1}}}\right).
	\end{split}
\end{equation}
Combining \eqref{Eq:I1lower},   \eqref{Eq:I1lowerfirst},   \eqref{Eq:I1lowersecond},   \eqref{Eq:I1lowersecondbis},   and \eqref{Eq:I1lowersecondbisbis},   we get
\begin{equation}\label{Eq:I1lowerFinal}
	\pi(H,   \cover_+,   \cover_-)\ge \left(1+O\left(m^{-\frac{1}{h2^{h+1}}}\right)\right)\left(1-\sum_{i\in\Jind}p_i\right)^{m}
	\prod_{i\in\Ind}(1-e^{-mp_i}).
\end{equation}
Now we show the upper bound. First we need an auxiliary fact. For convenience,  let us assume that $\Ind_1=\{1,  \ldots,  k\}$ and $\Ind_2\cup\Ind_3=\{k+1,  \ldots,  t_0\}$.
Given a sequence $(a_1,  \ldots,  a_{t_0})$ from $\{(a_i,   i\in\Ind)^{\ge 1}\}$ we have
\begin{align*}
	&\binom{m}{a_i,   i\in\Ind}p_0^{m-\sum_{j\in\Ind}a_j}\prod_{i\in \Ind}p_i^{a_i}=\\
	&=
	\binom{m}{a_1}\binom{m-a_1}{a_2}\ldots\binom{m-\sum_{j=1}^{k-1}a_j}{a_k}
	\left(\prod_{i\in \Ind_1}\left(\frac{p_i}{p_0}\right)^{a_i}\right)\\
	&\quad\cdot 
	\frac
	{\left(m-\sum_{j=1}^{k}a_j\right)!}
	{\left(m-\sum_{j=1}^{k}a_j-\sum_{j\in \Ind_2\cup\Ind_3}a_j\right)!\cdot\prod_{i\in \ind_2\cup \ind_3}a_i!}p_0^{m-\sum_{j\in \Ind_2\cup\Ind_3}a_j}\prod_{i\in \Ind_2\cup\Ind_3}p_i^{a_i}\\
	&\le
	\left(\prod_{i\in \Ind_1}\binom{m}{a_i}\left(\frac{p_i}{p_0}\right)^{a_i}\right)
	\frac
	{ m!}
	{\left(m-\sum_{j\in \Ind_2\cup\Ind_3}a_j\right)!\cdot\prod_{i\in \ind_2\cup \ind_3}a_i!}p_0^{m-\sum_{j\in \Ind_2\cup\Ind_3}a_j}\prod_{i\in \Ind_2\cup\Ind_3}p_i^{a_i}
	\\
	&=
	\left(\prod_{i\in \Ind_1}\binom{m}{a_i}\left(\frac{p_i}{p_0}\right)^{a_i}\right)
	\binom{m}{a_i,  i\in \Ind_2\cup\Ind_3}
	p_0^{m-\sum_{j\in \Ind_2\cup\Ind_3}a_j}\prod_{i\in \Ind_2\cup\Ind_3}p_i^{a_i}	.
\end{align*}
Since  $\{(a_i,   i\in\Ind)^{\ge 1}\}\subseteq \{1,  \ldots,  m\}^{t_0}\times \{(a_i,   i\in\Ind_2\cup\Ind_3)^{\ge 1}\}$,  
from \eqref{Eq:Pi2} we greedily  get
\begin{equation}\label{Eq:Upper1}
	\begin{split}
		&\pi(H,   \cover_+,   \cover_-)
		=\sum_{\{(a_i,   i\in\Ind)^{\ge 1}\}}\binom{m}{a_i,   i\in\Ind}p_0^{a_0}\prod_{i\in \Ind}p_i^{a_i}\\[5pt]
		&\le\left(\prod_{i\in \Ind_1}
		\sum_{a_i=1}^{m}
		\binom{m}{a_i}
		\left(
		\frac{p_i}{p_0}
		\right)^{a_i}
		\right) \cdot
		\left(\sum_{\{(a_i,   i\in\Ind_2\cup\Ind_3)^{\ge 1}\}}\binom{m}{a_i,  i\in\Ind_2\cup\Ind_3}
		p_0^{a_0}\prod_{i\in \Ind_2\cup \Ind_3}p_i^{a_i}\right).
	\end{split}
\end{equation}
We upper bound the first factor of the product in the last line of \eqref{Eq:Upper1}.
\begin{equation}\label{Eq:Upper15}
	\prod_{i\in \Ind_1}
	\left(
	\sum_{a_i=1}^{m}
	\binom{m}{a_i}
	\left(
	\frac{p_i}{p_0}
	\right)^{a_i}
	\right)
	=\prod_{i\in \Ind_1}
	\left(\left(1+\frac{p_i}{p_0}\right)^m-1\right)
	\le 	\prod_{i\in \Ind_1}
	\left(e^{\frac{mp_i}{p_0}}-1\right).		
\end{equation}
Recall that,   for $i\in \Ind_1$,   $mp_i\le \varepsilon = m^{-\frac{1}{h2^{h+1}}}= o(1)$. Then,  if $\Ind_1\neq \emptyset$,  by \eqref{Eq:pimale} we get
$$
p_0=1-\sum_{i\in \Ind\cup\Jind}p_i\ge 1-2^hc^{-2}m^{-\frac2h} = 1-o(m^{-\frac1h}).
$$
Since $e^u\le 1 + 2u$,   for $u\in (0,  1)$,   and $u\le e^u-1$,   for all $u$,  for $i\in\Ind_1$ and large $m$ 
\begin{align*}
	e^{\frac{mp_i}{p_0}}-1
	&=
	e^{mp_i}e^{mp_i\frac{1-p_0}{p_0}}-1
	\le 
	e^{mp_i}\left(1+2mp_i\frac{1-p_0}{p_0}\right)-1=\\
	&=
	e^{mp_i}-1+mp_ie^{mp_i}\cdot 2\frac{1-p_0}{p_0}\\
	&\le 
	(e^{mp_i}-1)\left(1+3\frac{1-p_0}{p_0}\right)=(e^{mp_i}-1)(1+o(m^{-\frac1h})).
\end{align*} 
Substituting this to \eqref{Eq:Upper15} we obtain
\begin{align}
	\prod_{i\in \Ind_1}
	\left(
	\sum_{a_i=1}^{m}
	\binom{m}{a_i}
	\left(
	\frac{p_i}{p_0}
	\right)^{a_i}
	\right)
	&\le
	\left(1+o(m^{-\frac1h})\right) 
	\prod_{i\in \Ind_1}
	\left(
	e^{mp_i}-1
	\right)
	\\\label{Eq:Upper2}&=
	\left(1+o(m^{-\frac1h})\right) 
	\prod_{i\in \Ind_1}
	\left(1-
	e^{-mp_i}
	\right)
	e^{\sum_{i\in \Ind_1}mp_i}.
\end{align}
Now we find the upper bound for the second term of the product from the last line of \eqref{Eq:Upper1}. By \eqref{Eq:SumaInd23cztery}
\begin{equation}\label{Eq:Upper3}
	\begin{split}
		&\sum_{\{(a_i,   i\in\Ind_2\cup\Ind_3)^{\ge 1}\}}\binom{m}{a_i,  i\in\Ind_2\cup\Ind_3}
		p_0^{a_0}\prod_{i\in \Ind_2\cup \Ind_3}p_i^{a_i}\\
		&=\left(
		1+O\left(m^{-\frac1h}\right)
		\right)
		\left(1-\sum_{i\in\Jind\cup\Ind_1}p_i\right)^{m}\prod_{i\in\Ind_2\cup\Ind_3}(1-e^{-mp_i})\\
		&\le\left(
		1+O\left(m^{-\frac1h}\right)
		\right)
		\left(1-\sum_{i\in\Ind_1}p_i\right)^{m}\left(1-\sum_{i\in\Jind}p_i\right)^{m}\prod_{i\in\Ind_2\cup\Ind_3}(1-e^{-mp_i})
		\\
		&\le\left(
		1+O\left(m^{-\frac1h}\right)
		\right)
		e^{-\sum_{i\in\Ind_1} mp_i}\left(1-\sum_{i\in\Jind}p_i\right)^{m}\prod_{i\in\Ind_2\cup\Ind_3}(1-e^{-mp_i}).
	\end{split}
\end{equation}
Therefore,  \eqref{Eq:Upper1},   \eqref{Eq:Upper2}  and \eqref{Eq:Upper3} imply
$$
\pi(H,   \cover_+,   \cover_-)\le \left(1+O\left(m^{-\frac{1}{h}}\right)\right)\left(1-\sum_{i\in\Jind}p_i\right)^{m}.
\prod_{i\in\Ind}(1-e^{-mp_i}).
$$  
The upper bound matches the lower bound from \eqref{Eq:I1lowerFinal},  which completes the proof of Theorem~\ref{Lem:CCprobability}.

\subsection{Proof of Corollary~\ref{Cor:CCprobability}}

Let $mp^3\le 1$. Then $p=O(m^{-1/3})$. Note that by the formula \eqref{eq:p_C} and the inequality  $(1-p|V(H)|)p^{|C|}\le p_C\le p^{|C|}$ we have
\begin{align}\label{eq:p_Cest}
	p_C=p^{|C|}(1+O(p)) \quad\text{ and }\quad p_C\le p^2.
\end{align}

We will be also using the standard expansions  $1-x=e^{-x(1+O(x^2))}$ (equivalently $\ln (1-x)=-x+O(x^2)$) and  $1-e^{-x}=x(1+O(x))$   valid for $x\in[0, 1/2]$.

For $mp^3\le1$,  by \eqref{eq:p_Cest} we get
\begin{align}
	\left(1-\sum_{C\in \cover_-}p_C\right)^{m}&=\left(1-|\cover_-^{(2)}|p^2-O(p^3)\right)^{m}=\exp(-|\cover_-^{(2)}|mp^2+O(mp^3))\\
	&= \exp\left(|\cover_-^{(2)}|m\ln(1-p^2)+O(mp^4+mp^3)\right)\\\label{eq:1-pc}
		&= (1-\pp)^{|\cover_-^{(2)}|}\left(1+O(mp^3)\right).
\end{align}
Furthermore,  since 
\begin{align*}
		1-e^{-mp^2(1-p)^{|V(H)|-2}}&=1-e^{m\ln(1-p^2)+O(mp^3)}=1-e^{m\ln(1-p^2)}(1+O(mp^3))\\
		&=1-(1-p^2)^m+O(mp^3)=\pp+O(mp^3), 
	\end{align*}
	we obtain
	\begin{align*}
	\prod_{C\in\cover_+}(1-e^{-mp_C})&=\prod_{C\in\cover_+^{(2)}}(1-e^{-mp_C})\prod_{C\in\cover_+\setminus \cover_+^{(2)}}\left(mp^{|C|}(1+O(mp^3+p))\right)\\
	&=(\pp+O(mp^3))^{|\cover_+^{(2)}|}\left(1+O(mp^3+p)\right)\prod_{C\in\cover_+\setminus \cover_+^{(2)}}mp^{|C|}\\
	&=\left(1+O(\pp^{-1}mp^3+p)\right)\pp^{|\cover_+^{(2)}|}\prod_{C\in\cover_+\setminus \cover_+^{(2)}}mp^{|C|}.
\end{align*}
Applying this and \eqref{eq:1-pc} to \eqref{eq:CC1} we arrive at 
\begin{align*}
		\pi(H,  \cover_+,   \cover_-)=\left(1+O(m^{-\delta_{|V(H)|}}+\pp^{-1}mp^3+p)\right)  (1-\pp)^{|\cover_-^{(2)}|}\pp^{|\cover_+^{(2)}|}\prod_{C\in\cover_+\setminus \cover_+^{(2)}}mp^{|C|}.
\end{align*}
Eventually,  for $mp^3\le 1$ we may bound
\begin{align*}
\pp^{-1}mp^3+p\asymp \frac{mp^3}{1\wedge (mp^2)}+p\le mp^3+2p\le mp^3+2m^{-1/3} \le mp^3+2m^{-\delta_{|V(H)|}}, 
\end{align*}
which completes the proof.

\subsection{Proof of Proposition \ref{Lem:CCequality}}\label{Sec:Proof3}
By the definition of $p_i$ \eqref{Eq:pi} and the equality \eqref{Eq:Pi2},   for any $H$,    the value of $\pi(H,    \cover_+,   \cover_- )$ depends only on the size of its vertex set and on the cardinalities of the sets in $\cover_+$ and $\cover_-$.  Thus,   Lemma~\ref{Lem:CCequality} follows.

\section{Applications and discussion}\label{Sec:Examples}

Our results are useful for estimating the moments of random variables counting small subgraphs. Let ${\bf H}$ be a graph and $X_{\bf H}$ be the random variable that counts the number of subgraphs isomorphic to ${\bf H}$ in $\Gnmp$. Let $\cal{H}$ be family of all graphs   isomorphic to ${\bf H}$ with the vertex set in $\V$. Then
\begin{equation*}
	\mathbb{E} X_{\bf H} = \sum_{H\in {\cal H}} \pi(H), \quad
	\operatorname{Var} X_{\bf H}= \sum_{H_1, H_2\in {\cal H}}\left(\pi(H_1\cup H_2)-\pi(H_1)\pi(H_2)\right).
\end{equation*}

In what follows we give,  as examples,  some tight bounds for $\pi(H)$ in some special cases. Moreover,  in Example~\ref{Ex:Var} and Example~\ref{Ex:Star:Path} we show  how Corollary~\ref{Lem:CCequality} combined with Theorem~\ref{Lem:CCprobability} allows us to obtain sharp bounds on the differences similar to those in the term $\pi(H_1\cup H_2)-\pi(H_1)\pi(H_2)$.

We begin with several elementary results concerning subgraph probabilities $\pi(H)$. Some of them are in the spirit of the results on induced subgraphs presented in \cite{1999KarSchSin},  although they were not explicitly stated there. Moreover,  \cite{1999KarSchSin} focuses primarily on the threshold function for existence of induced subgraphs. Therefore,  for the sake of completeness,  we provide a detailed discussion together with complete proofs.

Before turning to the main results, let us first make a general observation. Let $H_1$ and $H_2$ be any graphs with the vertex sets in $\V$. 	The indicators $\mathbf 1_{\{H_1\subseteq \Gnmp\}}$ and $\mathbf 1_{\{H_2\subseteq \Gnmp\}}$ are increasing in relation to Bernoulli random variables representing links between vertices and their attributes. Therefore,  by FKG inequality (see for example Theorem 2.12 in \cite{KsiazkaJLR}), we have
$$\E[\mathbf 1_{\{H_1\subseteq \Gnmp\}}\mathbf 1_{\{H_2\subseteq \Gnmp\}}]\ge \E[\mathbf 1_{\{H_1\subseteq \Gnmp\}}]\E[\mathbf 1_{\{H_2\subseteq \Gnmp\}}], $$
which gives the following fact. 

\begin{fact}\label{Lem:Division2}
		For any graphs $H_1,  H_2$ it holds that
		$$\pi(H_1\cup H_2)\geq \pi(H_1)\pi(H_2).$$
\end{fact}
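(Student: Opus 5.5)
The plan is to deduce the Fact directly from the FKG (Harris) inequality, applied on the product space of the independent vertex--attribute Bernoulli variables. Let $\Omega=\{0,1\}^{\V\times\A}$ carry the product measure in which each coordinate $\xi_{(v,a)}$ equals $1$ with probability $p$; the graph $\Gnmp$ is a deterministic function of $\xi$. I would argue in three steps.

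\emph{Step 1: monotonicity.} For a fixed graph $H$ with $V(H)\subseteq\V$, the event $\{H\subseteq\Gnmp\}$ says that for every edge $uv\in E(H)$ there is an attribute $a$ with $\xi_{(u,a)}=\xi_{(v,a)}=1$. This can be written as
$$\bigcap_{uv\in E(H)}\ \bigcup_{a\in\A}\{\xi_{(u,a)}=1\}\cap\{\xi_{(v,a)}=1\}.$$
Each inner event is increasing. Unions and intersections of increasing events are increasing, so $\{H\subseteq\Gnmp\}$ is an increasing event in $\xi$. Equivalently, switching any coordinate from $0$ to $1$ only adds attributes to vertices, and so can only add edges to $\Gnmp$.

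\emph{Step 2: apply FKG.} The FKG/Harris inequality for a product measure on $\{0,1\}^N$ (e.g.\ Theorem 2.12 in \cite{KsiazkaJLR}) gives $\Pra{A\cap B}\ge\Pra{A}\Pra{B}$ for increasing events $A,B$. Take $A=\{H_1\subseteq\Gnmp\}$ and $B=\{H_2\subseteq\Gnmp\}$.

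\emph{Step 3: identify the intersection.} By the definition $G\cup G'=(V(G)\cup V(G'),E(G)\cup E(G'))$, we have $H_1\cup H_2\subseteq\Gnmp$ if and only if both $H_1\subseteq\Gnmp$ and $H_2\subseteq\Gnmp$. This holds because containment as a subgraph on the fixed labelled vertex set $\V$ only requires the edges to be present. Hence $\Pra{A\cap B}=\pi(H_1\cup H_2)$, and combining with Step 2 gives
$$\pi(H_1\cup H_2)\ge\pi(H_1)\pi(H_2).$$

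There is no real obstacle here. The only point needing care is Step 1: monotonicity must be checked in the underlying independent variables $\xi$, not in the (dependent) edge indicators of $\Gnmp$. FKG applies to the former directly, whereas the edge indicators are not independent.
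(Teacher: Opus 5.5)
Your proposal is correct and is the same argument the paper gives: the indicators of $\{H_i\subseteq\Gnmp\}$ are increasing in the independent vertex--attribute Bernoulli variables, so the FKG inequality (Theorem 2.12 of \cite{KsiazkaJLR}) applies. Your Steps 1 and 3 simply spell out the monotonicity and the identity $\{H_1\cup H_2\subseteq\Gnmp\}=\{H_1\subseteq\Gnmp\}\cap\{H_2\subseteq\Gnmp\}$, which the paper leaves implicit.
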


This not only shows that $\pi(H_1\cup H_2)-\pi(H_1)\pi(H_2)$ is positive, but also leads to a simple estimate for $\pi(H)$. Namely, by applying Fact~\ref{Lem:Division2} inductively, treating each edge as a subgraph of $H$, we obtain  
\begin{equation*}
	\pp^{|E(H)|}\le \pi(H).
\end{equation*}

Recall that in the binomial random graph $G(n,\pp)$, the subgraph probability is given exactly by $\pp^{|E(H)|}$. In the random intersection graph model, however, the quantity $\pp^{|E(H)|}$ approximates the subgraph probability accurately only in specific ranges of the parameters. To see this, we first note that for $mp^2\ge 1$,  a careful examination of Theorem~\ref{Lem:CCprobability} for $mp^2\ge 1$ reveals that $ \pi(H)=(1+o(1))\pi(H, E(H), \emptyset)=(1+o(1))\pp^{|E(H)|}.$
In light of the above remarks,  in estimating $\pi(H)$,  we concentrate on the particularly interesting regime $mp^2 \le 1$.

Before deriving estimates for $\pi(H)$,  we note that it is plausible that,  in the sum \eqref{Eq:SubgraphProb},  only certain clique covers play a dominant role and,  in fact,  determine the value of $\pi(H)$. As already mentioned in \cite{1999KarSchSin},  it is indeed the case. First we concentrate on those subgraph probabilities  that are dominated by clique covers $E(H)$ or $\{V(H)\}$. We first provide precise estimates for the probabilities of these clique covers. Under this assumption $mp^2\le 1$ we have $mp^3\le m\cdot m^{-3/2}\le m^{-\delta_k}$ for any $k\ge 1$,  and hence \eqref{Eq:Cor1}  implies the following result.
\begin{corollary}\label{Cor:EH}
	Let $H$ be a graph and $mp^2\le 1$. Then  
	\begin{align}\label{eq:CC3}
		\pi(H,   E(H))&=(1+O(m^{-\delta_{|V(H)|}}))\pp^{|E(H)|},  \\\label{eq:CC4}
		\pi(H,  \{V(H)\})&=(1+O(m^{-\delta_{|V(H)|}}))(1-\pp)^{|E(H)|}mp^{|V(H)|}, 
	\end{align}
	\ \\[-10pt]where $\delta_{k}=1/({k2^{k+1}})$. As a consequence,  we have
		\begin{align*}
			\pi(H,   E(H))\asymp \pp^{|E(H)|},  \quad
			&\pi(H,  \{V(H)\})\asymp mp^{|V(H)|}.
	\end{align*}
\end{corollary}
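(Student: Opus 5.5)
The plan is to read off both formulas from \eqref{Eq:Cor1} in Corollary~\ref{Cor:CCprobability}, applied with two choices of $(\cover_+,\cover_-)$. The phrase ``$\pi(H,E(H))$'' is understood as $\pi(H,\cover)$ for the single cover $\cover=E(H)$, i.e.\ $\cover_+=E(H)$ and $\cover_-=\calP(H)\setminus E(H)$. Analogously, $\pi(H,\{V(H)\})$ uses $\cover_+=\{V(H)\}$ and $\cover_-=\calP(H)\setminus\{V(H)\}$.

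First we check the hypotheses of Corollary~\ref{Cor:CCprobability}. Since $mp^2\le1$, we have $p\le m^{-1/2}$, so $p$ is bounded away from $1$ for $m\ge 2$. Moreover $mp^3\le m^{-1/2}\le m^{-\delta_{|V(H)|}}$, because $\delta_k\le 1/8<1/2$. Hence the error $O(m^{-\delta_{|V(H)|}}+mp^3)$ in \eqref{Eq:Cor1} collapses to $O(m^{-\delta_{|V(H)|}})$. The requirement $m>m_0$ only affects finitely many $m$, and these are absorbed into the $O$-constant since all quantities are bounded.

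Next we identify the families of $2$-element sets in each case. The $2$-element members of $\calP(H)$ are exactly the edges of $H$.
\begin{itemize}
\item For $\cover=E(H)$ we get $\cover_+^{(2)}=E(H)$ and $\cover_+\setminus\cover_+^{(2)}=\emptyset$. Also $\cover_-^{(2)}=\emptyset$, since every edge lies in $\cover_+$. Then \eqref{Eq:Cor1} gives $(1+O(m^{-\delta}))\pp^{|E(H)|}$, which is \eqref{eq:CC3}.
\item For $\cover=\{V(H)\}$, suppose $|V(H)|\ge3$. Then $\cover_+^{(2)}=\emptyset$ and $\cover_-^{(2)}=E(H)$. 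The single non-pair set $V(H)$ contributes the factor $mp^{|V(H)|}$, giving \eqref{eq:CC4}.
\item If $|V(H)|=2$, then $V(H)$ is itself the only edge, so both covers coincide. Formula \eqref{eq:CC4} must then be read with $\cover_+^{(2)}=\{V(H)\}$. Here $\pp$ agrees with $mp^2(1-\pp)$ up to $1+O(mp^2)$, which we would have to check separately; this edge case is the only delicate point. I would handle it by noting that $\pp=mp^2(1+O(mp^2))$ and $(1-\pp)=1+O(mp^2)$.
\end{itemize}
Strictly, that edge case needs $mp^2\to0$. Otherwise one just notes that the asymptotic consequence still holds up to constants.

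Finally, the $\asymp$ statements follow from the two displays. For $m$ large, $1+O(m^{-\delta})\in[1/2,2]$. Also $1-\pp=(1-p^2)^m\ge(1-p^2)^{1/p^2}\ge c>0$ when $mp^2\le1$ and $p$ is small, so $(1-\pp)^{|E(H)|}\asymp1$. For small $m$ the finitely many cases are absorbed into constants. The main obstacle is only bookkeeping in checking that $mp^3$ is dominated by $m^{-\delta_{|V(H)|}}$, together with the degenerate two-vertex case.
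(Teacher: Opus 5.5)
Your proposal is correct and is essentially the paper's argument: the paper simply notes that $mp^3\le m^{-1/2}\le m^{-\delta_k}$ and reads both displays off \eqref{Eq:Cor1}. Your caveat about the case $|V(H)|=2$ is well founded, and the paper passes over it. There \eqref{eq:CC4} holds only with an extra $O(mp^2)$ error; for instance, at $mp^2=1$ the ratio of the two sides tends to $e-1$. The $\asymp$ consequence still holds in that case.
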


To determine the value of $\pi(H)$, it is crucial to understand which clique cover dominates in a given parameter regime. Below, we address this question for the clique covers $\{V(H)\}$ and $E(H)$.
\begin{proposition}\label{Prop:V}
	Let $mp \le 1$ and $H$ be a connected graph. Then
	\begin{align*}
		\pi(H) &= \left(1+O(mp)\right)\pi(H,  \{V (H)\})\\&= \left(1+O(m^{-\delta_{|V(H)|}}+mp)\right)mp^{|V (H)|}, 
	\end{align*}
	where $\delta_{k}=1/({k2^{k+1}})$.
\end{proposition}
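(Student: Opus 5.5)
We decompose $\pi(H)=\pi(H,\{V(H)\})+\sum_{\cover\ne\{V(H)\}}\pi(H,\cover)$ over $\cover\in\covers(H)$ and show that every clique cover other than $\{V(H)\}$ contributes at most $O(mp)\,mp^{|V(H)|}$. The second equality in the statement then follows at once from \eqref{eq:CC4} in Corollary~\ref{Cor:EH}. Indeed, $mp\le1$ implies $mp^2\le1$, so that corollary applies. Moreover $(1-\pp)^{|E(H)|}=1+O(mp^2)=1+O(mp)$, since $\pp\le mp^2$. Hence $\pi(H,\{V(H)\})=(1+O(m^{-\delta_{|V(H)|}}+mp))mp^{|V(H)|}$. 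We also have $\pi(H,\{V(H)\})\asymp mp^{|V(H)|}$, so it suffices to bound the remaining covers relative to $mp^{|V(H)|}$. The number of clique covers is bounded by a constant depending only on $h=|V(H)|$, so we may treat the covers one at a time.

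For a fixed $\cover\ne\{V(H)\}$ we use \eqref{Eq:Cor3}, valid since $mp^2\le1$:
\[
\pi(H,\cover)\le\pi(H,\cover,\emptyset)\lesssim\prod_{C\in\cover}mp^{|C|}=m^{|\cover|}p^{\sum_{C\in\cover}|C|}.
\]
(Alternatively, a union bound gives the same estimate: each $C$ must be built by some attribute, and the relevant events are negatively related.) It then remains to prove the combinatorial inequality
\[
\sum_{C\in\cover}|C|\ \ge\ |V(H)|+|\cover|-1
\qquad\text{whenever } \cover \text{ covers the connected graph } H.
\]
Granting this, $m^{|\cover|}p^{\sum|C|}\le m^{|\cover|}p^{h+|\cover|-1}=mp^{h}(mp)^{|\cover|-1}$. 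Since $mp\le 1$ and $|\cover|\ge2$ for every $\cover\ne\{V(H)\}$ (a single set must then be a proper subset or all of $V(H)$; if it is all of $V(H)$ the cover is $\{V(H)\}$ itself, and if it is proper it misses a vertex, which is impossible since connectivity and $h\ge2$ force every vertex to lie on an edge), we obtain $\pi(H,\cover)\lesssim mp\cdot mp^h$. Summing over the boundedly many covers gives $\pi(H)=\pi(H,\{V(H)\})+O(mp)\,mp^{h}=(1+O(mp))\pi(H,\{V(H)\})$.

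The key step, and the only nontrivial one, is the inequality $\sum_{C}|C|\ge h+|\cover|-1$. We prove it via the bipartite incidence graph $B$ between the vertices of $H$ and the sets of $\cover$, with $v$ joined to $C$ when $v\in C$. The graph $B$ has $h+|\cover|$ vertices and $\sum_C|C|$ edges. It is connected: every vertex of $H$ lies in some $C$, since it is an endpoint of an edge of the connected graph $H$ and edges are covered; each $C$ contains an edge of $H$; and any two sets are linked through paths in $H$, whose edges lie in sets of $\cover$. A connected graph has at least (number of vertices) $-1$ edges, which gives the inequality. The degenerate case $h=1$ is vacuous, since there are no edges and $\covers(H)$ is empty or trivial. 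We expect no serious obstacle beyond checking that \eqref{Eq:Cor3} applies with a uniform constant, i.e.\ $m>m_0$ (for bounded $m$ the statement is trivial after adjusting constants).
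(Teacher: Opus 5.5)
Your proposal is correct and follows the paper's proof. Both arguments decompose $\pi(H)$ over clique covers, bound each $\cover\neq\{V(H)\}$ by $\prod_{C\in\cover}mp^{|C|}$ via Corollary~\ref{Cor:CCprobability}, use the counting inequality $\sum|C|\ge |V(H)|+c-1$ to extract an extra factor $mp$, and finish with \eqref{eq:CC4}. The only real difference is in how that inequality is obtained: the paper uses an ordered chain $C_1,\dots,C_c$ whose unions grow connectedly and bounds the leftover factors by $1$, whereas your incidence-graph argument proves it for the whole cover at once and so also handles covers containing $V(H)$ together with other sets without any special ordering.
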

\begin{proof}
	For $H$ being an edge,  there is only one clique cover $\E(H)=\{V(H)\}$,  and the assertion follows trivially by Theorem \ref{Lem:CCprobability}. Let $|E(H)|\ge 2$ and  $\cover=\{C_1,  \ldots,  C_k\}\neq \{V(H)\}$ be a clique cover of $H$. Since  $H$ is connected,   $\cover\neq \{V(H)\}$ and $|E(H)|\geq2$,    we may assume that for some $2\le c\le |E(H)|$ the sequence  $S_i:=C_1\cup\ldots \cup C_i$,  $1\le i\le c$,  is a  sequence of increasing connected sets with $S_c=V(H)$. Then $\sum_{i=1}^c|C|\ge |V(H)|+c-1$ and,  as $mp^3\le mp\le 1$ and $1-\pp\le e^{-mp^2}\asymp1$,  \eqref{Eq:Cor1}  gives us 
	\begin{align*}
		\pi(H,  \cover)&\asymp \prod_{i=1}^c mp^{|C_i|}\overbrace{\prod_{i=c+1}^k mp^{|C_i|}}^{\le1}\\
		&\le m^cp^{|V(H)|+c-1}=mp^{|V(H)|}(mp)^{c-1}\le mp^{|V(H)|}mp.
	\end{align*}
	Combining this with \eqref{eq:CC4},  the asymptotics  $1-\pp=1-O(mp^2)$ for $mp^2\le 1$,  and \eqref{Eq:SubgraphProb} we obtain the assertion.
\end{proof}

\begin{proposition}\label{Prop:E}
	Let  $H$ be a graph with $h=|V(H)|\ge 2$. Then for $mp^2\le 1$ and $m^{1+\frac1h}p^2\ge1$ we have 
	\begin{align*}
	\pi(H)&=\left(1+O((m^{1+\frac1h}p^2)^{-3/2})\right)\pi(H,   E(H))\\
	&=\left(1+O((m^{1+\frac1h}p^2)^{-3/2}+m^{-\delta_h})\right)\pp^{|E(H)|},   
	\end{align*}
	where $\delta_h=1/({h2^{h+1}})$.
\end{proposition}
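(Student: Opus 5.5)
The plan is to write $\pi(H)=\sum_{\cover\in\covers(H)}\pi(H,\cover)$ via \eqref{Eq:SubgraphProb} and show that every clique cover $\cover\neq E(H)$ satisfies
\[
\pi(H,\cover)\lesssim (m^{1+\frac1h}p^2)^{-3/2}\,\pi(H,E(H)).
\]
Since $|\covers(H)|\le 2^{2^h}$ is a constant, summing these bounds gives the first equality. The second equality then follows directly from \eqref{eq:CC3} in Corollary~\ref{Cor:EH}.

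\textbf{Step 1: two-element covers.} Every $2$-element member of $\calP(H)$ is an edge of $H$. Hence the only clique cover consisting solely of $2$-sets is $E(H)$ itself, and every $\cover\ne E(H)$ contains sets $C_1,\dots,C_r$, $r\ge1$, with $s_j=|C_j|\ge 3$.

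\textbf{Step 2: charging edges.} Let $F\subseteq E(H)$ be the set of edges not belonging to $\cover$ as $2$-sets. Every such edge lies in some $C_j$, so I assign each $e\in F$ to one such $C_j$ and let $f_j$ be the number of edges assigned to $C_j$. Then $f_j\le\binom{s_j}{2}$ and $\sum_j f_j=|F|$.

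\textbf{Step 3: the ratio.} Since $mp^2\le1$, the estimate \eqref{Eq:Cor3} of Corollary~\ref{Cor:CCprobability} gives $\pi(H,\cover)\asymp\prod_{C\in\cover}mp^{|C|}$, and $\pi(H,E(H))\asymp \pp^{|E(H)|}\asymp (mp^2)^{|E(H)|}$. The $2$-sets of $\cover$ are exactly the edges of $E(H)\setminus F$, so their factors cancel and
\[
\frac{\pi(H,\cover)}{\pi(H,E(H))}\asymp\prod_{j=1}^r mp^{s_j}(mp^2)^{-f_j}\le\prod_{j=1}^r mp^{s_j}(mp^2)^{-\binom{s_j}{2}}.
\]
The last inequality uses $mp^2\le1$ and $f_j\le\binom{s_j}2$.

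\textbf{Step 4: one big set at a time.} Put $x=m^{1+\frac1h}p^2\ge1$, so that $p^2=x\,m^{-1-\frac1h}$. Substituting, a single factor with $s=s_j$ becomes
\[
x^{\frac s2-\binom s2}\,m^{\,1-\frac s2+\frac{s(s-2)}{2h}}.
\]
The exponent of $x$ is $-\tfrac{s(s-2)}{2}\le-\tfrac32$ for $s\ge3$.

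The exponent of $m$ is the delicate point. Because $s\le h$, we have $\tfrac{s(s-2)}{2h}\le\tfrac{s-2}{2}$, so the exponent is at most $1-\tfrac s2+\tfrac{s-2}2=0$. This is exactly where the threshold $m^{1+1/h}p^2$ and the exponent $3/2$ come from; the extreme case $s=3$ gives $x^{-3/2}$.

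Each factor is therefore at most $x^{-3/2}$, and $x\ge1$ makes the product over $j$ at most $x^{-3/2}$ as well. This establishes the claimed bound for every $\cover\neq E(H)$, and the result follows from Step 1.

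The main obstacle I expect is the bookkeeping in Steps 2–3. The approach requires that edges covered by several sets are handled consistently by the assignment, and that the implicit constants in \eqref{Eq:Cor3} depend only on $H$. Both points seem fine, since $\covers(H)$ is finite.
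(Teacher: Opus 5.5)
Your proposal is correct and takes essentially the same route as the paper. Both arguments sum over clique covers, use \eqref{Eq:Cor3} to get $\pi(H,\cover)\asymp\prod_{C\in\cover}mp^{|C|}$, and show that each set of size $k\ge3$ contributes at most $(mp^2)^{\binom{k}{2}}(m^{1+\frac1h}p^2)^{-3/2}$. The paper obtains this from the identity $mp^k=(mp^2)^{\binom k2}(m^{1+\frac1k}p^2)^{-k(k-2)/2}$, which is equivalent to your exponent computation in the variable $x$.

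The only difference is cosmetic bookkeeping. The paper sets $e_i=|E(H[C_i])|$ for every $C_i$, including $2$-sets, and uses $\sum_i e_i\ge|E(H)|$. You instead cancel the $2$-sets explicitly and charge the remaining edges to the larger sets.
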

\begin{proof}For $h=2$ we have $\pi(H)=\pp$ and the assertion trivially holds, so we assume $h\ge3$.
Let $\cover=\{C_1, \ldots,  C_k\}$ be a clique cover of $H$. Let $e_i$ denote the number of edges covered by $C_i$,  i.e.,  $e_i=|E(H[C_i])|$. Then  $e_i\le \binom{|C_i|}{2}$ and $\sum_{i=1}^{t}e_i\ge |E(H)|$. For $|C_i|=2$ it clealy holds that  $mp^{|C_i|}= (mp^2)^{e_i}$. Furthermore,  since $mp^2\le 1$ and $m^{1+\frac{1}{h}}p^2\ge 1$, for $3\le k:=|C_i|\le h=|V(h)|$ we obtain
\begin{align}\nonumber
	mp^{|C_i|}
	&=\frac{mp^{k}}{(mp^2)^{\binom{k}{2}}}(mp^2)^{\binom{k}{2}}
	=\frac{(mp^2)^{\binom{k}{2}}}{\left(m^{1+\frac{1}{k}}p^2\right)^{k(k-2)/2}}\le \frac{(mp^2)^{e_i}}{\left(m^{1+\frac{1}{h}}p^2\right)^{k(k-2)/2}}\\
	&\le \frac{(mp^2)^{e_i}}{\left(m^{1+\frac{1}{h}}p^2\right)^{3/2}}.\label{Eq:Split}
\end{align}
 For $\cover\neq E(H)$ we there is at least one $C_i\in\cover$ such that $|C_i|\ge 3$. Thus, by  corollaries~\ref{Cor:CCprobability}  and \ref{Cor:EH},
\begin{align*}
	\pi(H,  \cover)&\asymp \prod_{i=1}^t mp^{|C_i|}\le \left(m^{1+\frac{1}{h}}p^2\right)^{-3/2}\prod_{i=1}^{t} (mp^2)^{e_i} \le \left(m^{1+\frac{1}{h}}p^2\right)^{-3/2} (mp^2)^{|E(H)|} \\
	&= O\left(\left(m^{1+\frac{1}{h}}p^2\right)^{-3/2}\right)\pi(H, E(H)). 
\end{align*}
The lemma follows by \eqref{Eq:SubgraphProb} and Corollary~\ref{Cor:EH}.
\end{proof}
Next, we show that for some  important classes of graphs the clique covers $E(H)$ and $\{V(H)\}$ are the leading ones in the whole range of parameters.
\begin{proposition}\label{Lem:PiOptimal}For  $H$ being a tree $\mathbf T_r$,  a cycle $\mathbf C_r$ or a complete graph $\mathbf K_r$ on $r$ vertices we have
	\begin{align}\label{eq:pi=E+V}
		\pi(H)& \asymp \pi (H,    E(H))+ \pi (H,    \{V(H)\}).\\\label{eq:pi=E+Vest}
	\end{align}
\end{proposition}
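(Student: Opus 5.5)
The lower bound is immediate: $\pi(H)=\sum_{\cover}\pi(H,\cover)$ is at least each of $\pi(H,E(H))$ and $\pi(H,\{V(H)\})$, hence at least half their sum. For the upper bound, the number of clique covers of $H$ is a constant depending only on $r$. So it suffices to show that every single cover satisfies $\pi(H,\cover)\lesssim \pi(H,E(H))+\pi(H,\{V(H)\})$.

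I would do the main work in the regime $mp^2\le 1$. There \eqref{Eq:Cor3} gives $\pi(H,\cover)\asymp \prod_{C\in\cover}mp^{|C|}=m^kp^s$, where $k=|\cover|$ and $s=\sum_{C\in\cover}|C|$. Corollary~\ref{Cor:EH} gives $\pi(H,E(H))\asymp (mp^2)^{|E|}$ and $\pi(H,\{V(H)\})\asymp mp^r$. If $V(H)\in\cover$, the claim is trivial, because every other factor satisfies $mp^{|C|}\le mp^2\le 1$. Otherwise I reduce to an exponent inequality. Since $p\le1$ and $mp^2\le 1$, it suffices to find $\lambda\in[0,1]$ with
\begin{equation*}
m^kp^s\le \big((mp^2)^{|E|}\big)^{\lambda}\big(mp^r\big)^{1-\lambda},
\end{equation*}
because the right-hand side is at most the maximum of the two terms.

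If $k\ge |E|$, I take $\lambda=1$. Then $s\ge 2k$ gives $m^kp^s\le (mp^2)^{k}\le(mp^2)^{|E|}$. If $k<|E|$, I choose $\lambda$ so that the powers of $m$ agree, i.e.\ $k=\lambda|E|+1-\lambda$. The required condition then becomes
\begin{equation*}
s-2k\ \ge\ \frac{(|E|-k)(r-2)}{|E|-1}.
\end{equation*}
I then verify this case by case, writing $c_i=|C_i|$.

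For a tree, $|E|=r-1$ and the right side is $r-1-k$. Each $C_i$ induces a subforest, so it contains at most $c_i-1$ edges of the tree. Hence $\sum(c_i-1)\ge r-1$, i.e.\ $s\ge k+r-1$, which is exactly what is needed.

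For a cycle with $V(H)\notin\cover$, each proper subset contains at most $c_i-1$ cycle edges, so $s\ge k+r$. Then $s-2k=r-k\ge (r-k)(r-2)/(r-1)$.

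For $K_r$, the right side is $2(\binom r2-k)/(r+1)$. Using $\binom{c_i}{2}-1=(c_i-2)(c_i+1)/2\le (c_i-2)(r+1)/2$ and $\sum\binom{c_i}{2}\ge\binom r2$, we get $\sum(c_i-2)\ge 2(\binom r2-k)/(r+1)$. This is precisely the needed bound.

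The remaining regime is $mp^2>1$, and this is the step I expect to be the main obstacle, since \eqref{Eq:Cor3} is no longer available. Here $\pi(H)\le 1$ and $\pi(H)\ge\pp^{|E|}\asymp 1$. So I must show that one of the two distinguished covers has probability $\asymp1$. I would use Theorem~\ref{Lem:CCprobability} directly. Its product factors are $1-e^{-mp_C}$, and these are $\asymp1$ or $\asymp mp_C$. Its forbidden factor $(1-\sum_{C\in\cover_-}p_C)^m$ is bounded below by something like $e^{-O(m\sum p_C)}$. If this loses too much, for instance for trees, where forbidden non-edges cost $(1-\pp)$, I would compare against the cover that best balances these factors, or absorb the loss using the remark before Corollary~\ref{Cor:EH} that $\pi(H)=(1+o(1))\pi(H,E(H),\emptyset)$. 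It may turn out that the statement is really intended only for $mp^2\le1$; that is the regime the surrounding discussion focuses on.
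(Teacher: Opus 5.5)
Your argument for the regime $mp^2\le 1$ is correct, and it takes a different route from the paper's. The paper splits that regime into three pieces. For $mp\le 1$ it invokes Proposition~\ref{Prop:V}. For $mp\ge 1$ it treats trees and cycles via $|C_i|\ge e_i+1$, which gives $\prod_i mp^{|C_i|}\le \prod_i (mp)^{1-e_i}(mp^2)^{e_i}\le (mp^2)^{|E(H)|}$. For $\kli_r$ it runs an induction on $r$ over three ranges of $p$, using Proposition~\ref{Prop:E} in the top range. Your weighted-geometric-mean reduction to $s-2k\ge (|E|-k)(r-2)/(|E|-1)$ handles all of $mp^2\le 1$ at once. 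It uses the same forest edge count for trees and cycles, and it replaces the induction for $\kli_r$ by the one-line bound $\binom{c_i}{2}-1\le (c_i-2)(r+1)/2$. Your route is shorter and self-contained. The paper's case analysis, in exchange, shows explicitly which cover dominates in which range (e.g.\ the crossover $p=m^{-(r+1)/(2r)}$ for $\kli_r$).

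The gap is the regime $mp^2>1$, where what you write is a plan, not a proof. Two concrete points.

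First, your worry about trees is unfounded. $\calP(H)$ contains only sets that contain an edge, so no non-edge pair is ever forbidden, and every $C\in\calP(H)\setminus E(H)$ has $|C|\ge 3$. Theorem~\ref{Lem:CCprobability} then gives directly
$\pi(H,E(H))\ge(1+o(1))(1-2^rp^3)^m\big(1-e^{-mp^2(1-p)^{r-2}}\big)^{|E(H)|}$.
This is $\asymp 1\asymp\pi(H)$ whenever $mp^2\ge 1$ and $mp^3=O(1)$, and that is the step you are missing.

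Second, no argument can close the case $mp^3\to\infty$ (with $p$ bounded away from $1$), because the statement fails there. For the tree $P_3$ the theorem gives $\pi(P_3,E(P_3))\le(1+o(1))(1-p^3)^m$ and $\pi(P_3,\{V(P_3)\})\le(1+o(1))(1-2p^2(1-p))^m$. Both tend to $0$, while $\pi(P_3)\ge\pp^2\to 1$. The same happens for cycles and cliques with $r\ge 3$. So the right restriction is $mp^3=O(1)$, not $mp^2\le 1$.

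In particular, consider the remark $\pi(H)=(1+o(1))\pi(H,E(H),\emptyset)$ that you propose to use, which the paper itself cites to dispose of $mp^2\ge 1$. It concerns the family of all covers containing $E(H)$, not the single cover $E(H)$. The two differ by the factor $\big(1-\sum_{C\in\calP(H)\setminus E(H)}p_C\big)^m$, which is $\asymp 1$ only when $mp^3=O(1)$. So it cannot yield the claim beyond that range.
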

\begin{remark}
	In fact we could prove a sharper statement that for $H\in \{{\mathbf T_r},  {\mathbf C_r},  \kli_r\}$	
	\begin{equation*}
		\pi(H)=(1+o(1)) (\pi(H, \{V(H)\})+\pi(H, E(H))), \quad \text{ except the case }H={\mathbf T_r}\text{ and }mp\asymp 1.
	\end{equation*}
	However,  we omit the proof,  as it does not introduce any new ideas and merely requires considering a larger number of cases.	
\end{remark}
\begin{proof}[Proof of Proposition~\ref{Lem:PiOptimal}]
By \eqref{Eq:SubgraphProb},  in general $\pi(H)\ge \pi(H, E(H))+\pi(H, \{V(H)\})$. 
Therefore in view of previous remarks concenrning the case $mp^2\ge 1$ and Proposition~\ref{Prop:V},  it is sufficient to prove the upper bound in \eqref{eq:pi=E+V} for $mp^2\le 1$ and $mp\ge 1$.

Let $H={\mathbf T_r}$ or $H={\mathbf C_r}$.
By \eqref{Eq:SubgraphProb} 
\begin{align*}
	\pi(H)=\pi(H, \{V(H)\}, \emptyset)+\pi(H, E(H))+\sum_{\cover\in\covers(H)\setminus\{E(H)\}, V(H)\notin \cover}\pi(H, \cover)
\end{align*}
By \eqref{Eq:Cor1}, for $mp^2\le 1$ we have $\pi(H, \{V(H)\}, \emptyset)\asymp \pi(H, \{V(H)\})$. Therefore,  we need to find an upper bound on $\pi(H, \cover)$ for clique covers $\cover$ satisfying  $V(H)\notin \cover$. 
In fact it suffices  to prove that for $H={\mathbf T_r}$ and $H={\mathbf C_r}$ we have
\begin{equation}
	\forall_{\cover\in \covers(H):  V(H)\notin \cover}\  \pi(H, \cover) = O(\pi(H, E(H))).
\end{equation}
Let $\cover=\{C_1, \ldots, C_t\}$ and let $e_i$ be the number of edges covered by $C_i$,  $1\le i\le t$. 
Since $V(H)\not\in \cover$,  then $H [C_i]$ is a forest. Consequently,   we have $|C_i|=|V(H[C_i])|\geq e_i+1$,  and $\sum_{i=1}^t e_i \ge |E(H)|$. Consequently,  by \eqref{Eq:Cor3} and \eqref{eq:CC3},  for $mp\ge 1$ and $mp^2\le 1$
	\begin{align*}
		\pi(H,  \cover)
		&\asymp \prod_{i=1}^tmp^{|C_i|}
		\le  \prod_{i=1}^tmp^{e_i+1}\\
		&= \prod_{i=1}^t\frac{1}{(mp)^{e_i-1}}(mp^2)^{e_i}
		\le (mp^2)^{\sum_{i=1}^t e_i} \le (mp^2)^{|E(H)|}  =  O(\pi(H, E(H))).
	\end{align*}

	For $H=\kli_r$ we use mathematical induction. In view of Corollary \ref{Cor:EH} the assertion in equivalent to
	\begin{equation*}
		\forall_{r\ge 2}\ \ 
		\pi(\kli_{r})\asymp
		\begin{cases}
			\pi(\kli_{r}, \{V(\kli_{r})\})\asymp mp^{r}, &\text{ for }m^{-1}\le p \le m^{-\frac{r+1}{2r}};\\
			\pi(\kli_{r}, E(\kli_{r}))\asymp (mp^{2})^{\binom{r}{2}}, &\text{ for }m^{-\frac{r+1}{2r}}\le p \le m^{-\frac{1}{2}}.
		\end{cases}
	\end{equation*} 
	For $r=2$ it holds trivially. Assume that the statement holds for some $r\ge 2$. We will prove it for $r+1$.  Consider three cases. 
	
	\noindent{\sl Case 1:} $m^{-1}\le p\le m^{-\frac{r+1}{2r}}$. Then 
	\begin{equation}\label{Eq:Ind:1}
		(mp^2)^{r}\le \left(m^{1-\frac{r+1}{r}}\right)^r = m^{-1}\le p.
	\end{equation}
	Let   $v\in V(\kli_{r+1})$ and $\cover$ be a clique cover of $\kli_{r+1}$.  By \eqref{Eq:Cor3} and \eqref{Eq:Ind:1} we get
	\begin{align*}
		\pi(H,   \cover)
		&\asymp\prod_{C\in\cover^{(2)}: v\in C}mp^{2}
		\prod_{C\in\cover\setminus \cover^{(2)}: v\in C}p\cdot mp^{|C|-1}
		\prod_{C\in\cover: v\not\in C}mp^{|C|}\\
		&\le p^{|\{C\in\cover^{(2)}: v\in C\}|}
		p^{|\{C\in\cover\setminus \cover^{(2)}: v\in C\}|}
		\prod_{C\in\cover\setminus \cover^{(2)}: v\in C} mp^{|C|-1}
		\prod_{C\in\cover: v\not\in C}mp^{|C|}\\
		&\asymp p^{|\{C\in\cover: v\in C\}|}\pi\left(\kli_{r+1} - v,  \{C\in\cover: v\not\in C\}\cup\{C\setminus \{v\}: C\in\cover\setminus \cover^{(2)},  v\in C\} \right)\\
		&\le  p\,\pi(\kli_r).
	\end{align*}
Therefore, by the inductive hypothesis,
\begin{align*}
	\pi(H,   \cover)=O(p) mp^{r} = O(mp^{r+1}), 
\end{align*}
which,  together with \eqref{eq:CC4},  implies the statement in this case.

\noindent{\sl Case 2:} $m^{-\frac{r+1}{2r}}\le p\le m^{-\frac{r+2}{2(r+1)}}$. 
Using calculations analogous to those in \eqref{Eq:Split}, we obtain in this case
\begin{equation}\label{Eq:Split2}
	\forall_{k\le r}\,  mp^k\le (mp^2)^{\binom{k}{2}}\quad \text{and}\quad mp^{r+1}\ge (mp^2)^{\binom{r+1}{2}}.
\end{equation}
Let $\cover=\{C_1, \ldots, C_t\}$ be a clique cover of $\kli_{r+1}$. If $V(\kli_{r+1})\in \cover$,  then \eqref{Eq:Cor3} implies $\pi(\kli_{r+1}, \cover)=O(mp^{r+1})$. Otherwise $C_i\neq V(\kli_{r+1})$,  $i=1, \ldots, t$,  and $C_1, \ldots, C_t$ cover all the edges of $\kli_{r+1}$,  hence
\begin{equation*}
\forall_{i} |C_i|\le r\quad\text{and}\quad	\sum_{i=1}^{t}\binom{|C_i|}{2}\ge \binom{r+1}{2}.
\end{equation*} 
Therefore, by \eqref{Eq:Cor3} and \eqref{Eq:Split2},
\begin{align*}
	\pi(\kli_{r+1}, \cover)
	&\asymp \prod_{i=1}^{t}mp^{|C_i|}
	\le \prod_{i=1}^{t}(mp^2)^{\binom{|C_i|}{2}}
	\le (mp^2)^{\binom{r+1}{2}}\le mp^{r+1}.
\end{align*}
Consequently,  for every clique cover $\cover$  we have $\pi(\kli_{r+1}, \cover)=O(mp^{r+1})$,  and the assertion follows from \eqref{eq:CC4}.

\noindent{\sl Case 3:} $p\ge m^{-\frac{r+2}{2(r+1)}}$. The statement is implied by Proposition~\ref{Prop:E}.

This completes the proof.	
\end{proof}

As we can see from Proposition~\ref{Lem:PiOptimal},  in the case of trees,  cycles,  and cliques,  the behaviour of their existence probabilities is determined by the trivial clique covers: $E(H)$ and $\{V(H)\}$. Nevertheless,  this phenomenon does not extend to arbitrary subgraphs,  as demonstrated below using the butterfly graph as an example.
\begin{example}Let $H$ be a butterfly graph,  i.e.,  $H=\mathbf C^{(1)}_{3}\cup \mathbf C^{(2)}_{3}$,  where $\mathbf C^{(1)}_{3}$ and $  \mathbf C^{(2)}_{3}$ are two triangles sharing exactly one vertex. Consider the clique cover $\cover=\{V(\mathbf C^{(1)}_{3}),  V(\mathbf C^{(2)}_{3})\}$.  Then for $m^{-2/3}\ll p\ll m^{-1}$ it holds that 
	$$\pi (H)>\pi(H,  \cover)\gg \pi(H,  E(H))+\pi(H,  \{V(H)\}).$$
	Indeed,  since $p\ll m^{-2/3}$,  by \eqref{Eq:Cor3} and Corollary~\ref{Cor:EH},  we get  
	$$\pi(H,  \cover)\asymp m^2p^6=m^6p^{12}(m^{2/3}p)^{-6}\gg m^6p^{12}\asymp \pi(H,  E(H)).$$
	Furthermore,  for $p\gg m^{-1}$ we have
	$$\pi(H,  \cover)\asymp m^2p^6=mp^5(mp)\gg mp^5\asymp \pi(H,  \{V(H)\}).$$
\end{example}



Finally,  in the next two examples,  we present some applications of Proposition~\ref{Lem:CCequality}. We not only present a method for comparing the probabilities of different subgraphs,  but also illustrate the effect of edge dependencies on subgraph probabilities. The following result can also be formulated in full generality. However,  for the sake of clarity and readability,  we restrict our attention to the case $mp^3=O(1)$. We focus on this particularly interesting regime,  as it already demonstrates the relevance of formulating and proving Theorem~\ref{Lem:CCprobability} at the level of generality considered in this paper. Indeed,  although the probability of any fixed subgraph in $\Gnmp$ tends to 1 when $mp^2\to\infty$,  the probabilities associated with individual clique covers may still play an important role,  particularly when comparing the probabilities of different subgraphs. Such comparisons arise naturally in the estimation of central moments.

\begin{figure}
	\begin{center}
		\begin{tikzpicture}[xscale=1.5,   yscale=1.5]
			\tikzstyle{vert}=[line width=.3mm,    draw,    circle,    fill=white,    inner sep=2pt];
			\node[vert] (v1) at (0,   1) {};
			\node[vert] (v2) at (1,   1) {}; 
			\node[vert] (v3) at (1,   0) {};
			\node[vert] (v4) at (0,   0) {};
			\node[left] at (v1) {\footnotesize $v_1$}; 
			\node[left] at (v4) {\footnotesize $v_4$}; 
			\node[right] at (v2) {\footnotesize $v_2$}; 
			\node[right] at (v3) {\footnotesize $v_3$}; 
			\draw[ultra thick] (v1)--(v2) node[pos=.5, circle, fill=white, inner sep=1pt] {$e_1$};
			\draw[ultra thick] (v3)--(v1)node[pos=.5, circle, fill=white, inner sep=1pt] {$e_2$};
			\node at (.5, -.5) {$H_1$};
		\end{tikzpicture}
		\quad
		\begin{tikzpicture}[xscale=1.5,   yscale=1.5]
			\tikzstyle{vert}=[line width=.3mm,    draw,    circle,    fill=white,    inner sep=2pt];
			\node[vert] (v1) at (0,   1) {};
			\node[vert] (v2) at (1,   1) {}; 
			\node[vert] (v3) at (1,   0) {};
			\node[vert] (v4) at (0,   0) {};
			\node[left] at (v1) {\footnotesize $v_1$}; 
			\node[left] at (v4) {\footnotesize $v_4$}; 
			\node[right] at (v2) {\footnotesize $v_2$}; 
			\node[right] at (v3) {\footnotesize $v_3$}; 
			\draw[ultra thick] (v1)--(v2)node[pos=.5, circle, fill=white, inner sep=1pt] {$e_1'$};
			\draw[ultra thick] (v3)--(v4)node[pos=.5, circle, fill=white, inner sep=1pt] {$e_2'$};
			\node at (.5, -.5) {$H_2$};
		\end{tikzpicture}
		\quad\begin{tikzpicture}[xscale=1.5,   yscale=1.5]
			\tikzstyle{vert}=[line width=.3mm,    draw,    circle,    fill=white,    inner sep=2pt];
			\node[vert] (v1) at (0,   1) {};
			\node[vert] (v2) at (1,   1) {}; 
			\node[vert] (v3) at (1,   0) {};
			\node[vert] (v4) at (0,   0) {};
			\node[left] at (v1) {\footnotesize $v_1$}; 
			\node[left] at (v4) {\footnotesize $v_4$}; 
			\node[right] at (v2) {\footnotesize $v_2$}; 
			\node[right] at (v3) {\footnotesize $v_3$}; 
			\draw[ultra thick] (v1)--(v2) node[pos=.5, circle, fill=white, inner sep=1pt] {$e_1$};
			\draw[ultra thick] (v3)--(v1)node[pos=.5, circle, fill=white, inner sep=1pt] {$e_2$};
			\draw[ultra thick] (v1)--(v4)node[pos=.5, circle, fill=white, inner sep=1pt] {$e_3$};
			\node at (.5, -.5) {$K_{1, 3}$};
		\end{tikzpicture}
		\quad
		\begin{tikzpicture}[xscale=1.5,   yscale=1.5]
			\tikzstyle{vert}=[line width=.3mm,    draw,    circle,    fill=white,    inner sep=2pt];
			\node[vert] (v1) at (0,   1) {};
			\node[vert] (v2) at (1,   1) {}; 
			\node[vert] (v3) at (1,   0) {};
			\node[vert] (v4) at (0,   0) {};
			\node[left] at (v1) {\footnotesize $v_1$}; 
			\node[left] at (v4) {\footnotesize $v_4$}; 
			\node[right] at (v2) {\footnotesize $v_2$}; 
			\node[right] at (v3) {\footnotesize $v_3$}; 
			\draw[ultra thick] (v1)--(v2)node[pos=.5, circle, fill=white, inner sep=1pt] {$e_1'$};
			\draw[ultra thick] (v3)--(v4)node[pos=.5, circle, fill=white, inner sep=1pt] {$e_2'$};
			\draw[ultra thick] (v1)--(v4)node[pos=.5, circle, fill=white, inner sep=1pt] {$e_3'$};
			\node at (.5, -.5) {$P_4$};
		\end{tikzpicture}
	\end{center}
	\caption{\label{Fig:ExVar} Graphs from examples~\ref{Ex:Var}
		and~\ref{Ex:Star:Path}.}
\end{figure}

In the standard Bernoulli random graph,  edges are independent,  and hence the probability that a given path $P_3$ consisting of two edges appears as a subgraph is equal to $\pp^2$. Using Lemma~\ref{Lem:CCequality},  we can precisely quantify the difference between the probability  $\pi(P_3)$ and $\pp^2$. In this sense,  this difference provides a measure of the dependence between the two edges. Moreover,  our analysis allows us to identify the clique covers that are responsible for the dependencies between edges.

\begin{example}\label{Ex:Var}
	We will show that for $mp^3=O(1)$ and $m\to \infty$
	\begin{equation}
		\pi(P_3)-\pp^2=
		\left(1+O(m^{-\delta_4})\right)e^{-2mp^2+mp^3}\left(1-e^{-mp^3}\right),
	\end{equation}
where $\delta_k=1/({k2^{k+1}})$. In particular	
	\begin{equation}
		\pi(P_3)-\pp^2=
		\begin{cases}
			\left(1+O(m^{-\delta_4}+mp^2)\right)mp^3&\text{ for }mp^2=o(1);\\
			\left(1+O(m^{-\delta_4}+mp^3)\right)mp^3e^{-2mp^2}&\text{ for }mp^3=o(1).\\
		\end{cases}
	\end{equation}	
	
	Denote $V=\{v_1, \ldots, v_4\}$ and $A_i=V\setminus \{v_i\}$,  $i=1, 2, 3, 4$. Let $H_1$ be a graph consisting of a path $(v_2, v_1, v_3)$ and the isolated vertex $v_4$, and let $H_2$ consist of two edges $\{v_1, v_2\}$ and $\{v_3, v_4\}$ (see Figure~\ref{Fig:ExVar}). Set $e_1=\{v_1, v_2\}$ and $e_2=\{v_1, v_3\}$  in $H_1$ and  $e_1'=\{v_1, v_2\}$ and $e_2'=\{v_3, v_4\}$  in $H_2$. 
	We note that $\cP(H_1)=\{e_1, e_2, A_2, A_3, A_4, V\}$ and  $\cP(H_2)=\{e_1', e_2', A_1, A_2, A_3, A_4, V\}$.
	Moreover, by \eqref{Eq:SubgraphProb} and \eqref{Eq:SubgraphProbabilityCCfamily2},
	\begin{align}
		\nonumber\pi(P_3)&=\pi(H_1)=\\
		\label{H1:1}&=\pi(H_1, \{V\}, \emptyset)+\pi(H_1, \{e_1, e_2\}, \{V\})\\
		\label{H1:2}&+\pi(H_1, \{e_1, A_2\}, \{e_2, V\})+\pi(H_1, \{e_1, A_4\}, \{e_2, A_2, V\})\\
		\label{H1:3}&+\pi(H_1, \{e_2, A_3\}, \{e_1, V\})+\pi(H_1, \{e_2, A_4\}, \{e_1, A_3, V\})\\
		\label{H1:4}&+\pi(H_1, \{A_2, A_3\}, \{e_1, e_2, A_4, V\})\\
		\label{H1:5}&+\pi(H_1, \{A_4\}, \{e_1, e_2, V\}), 
	\end{align}   
	and
	\begin{align}
		\nonumber\pp^2&=\pi(H_2)=\\
		\label{H2:1}&=\pi(H_2, \{V\}, \emptyset)+\pi(H_2, \{e_1', e_2'\}, \{V\})\\
		\label{H2:2}&+\pi(H_2, \{e_1', A_2\}, \{e_2', V\})+\pi(H_2, \{e_1', A_1\}, \{e_2', A_2, V\})\\
		\label{H2:3}&+\pi(H_2, \{e_2', A_3\}, \{e_1', V\})+\pi(H_2, \{e_2', A_4\}, \{e_1', A_3, V\})\\
		\label{H2:4}&+\pi(H_2, \{A_2, A_3\}, \{e_1', e_2', A_4, V\})\\
	\nonumber&+\pi(H_2, \{A_1, A_4\}, \{e_1', e_2', V\})+\pi(H_2, \{A_2, A_4\}, \{e_1', e_2', A_1, V\})\\
			\label{H2:5}& + \pi(H_2, \{A_1, A_3\}, \{e_1', e_2', A_2.A_4, V\}).
	\end{align}   
	By Lemma~\ref{Lem:CCequality}
	\begin{equation}\label{Eq:Ex:Cov:1}
		\eqref{H1:1}=\eqref{H2:1},  \quad \eqref{H1:2}=\eqref{H2:2}, \quad \eqref{H1:3}=\eqref{H2:3}, \quad\text{and}\quad \eqref{H1:4}=\eqref{H2:4}.
	\end{equation}
	What has left is to comparethe expressions \eqref{Eq:Ex:Cov:2} and  \eqref{Eq:Ex:Cov:3}. By \eqref{eq:p_C} we have $p_{e_i}=p_{e_i'}=p^2(1-p)^2$,  $i=1, 2$,  $p_{A_j}=p^3(1-p)$,  $j=1, 2, 3, 4$,  and $p_V=p^4$.
	Moreover,  we will make use of the following estimates. For $mp^3=O(1)$ and a positive integer $k$ we have $1-e^{-kmp^3}\asymp mp^3$,  $mp^4=O(p)$ and $p=o(1)$.  Therefore, we get  
	\begin{equation}
	  1-e^{-kmp^3+O(mp^4)}=1-e^{-kmp^3}(1+O(mp^4))=1-e^{-kmp^3}+O(mp^4)=(1-e^{-kmp^3})(1+O(p)).
	\end{equation}
	Furthermore $p = O(m^{-1/3})$ and $1/3\ge 1/2^7 = \delta_4$. Thus
	$p=O(m^{-\delta_4})$.
	In addition,  for $x=o(1)$ we will use the relation $1+x=e^{x+O(x^2)}$.
		Therefore, by Theorem~\ref{Lem:CCprobability},  for $mp^3=O(1)$,  as $m\to \infty$, 
	\begin{align}
		\nonumber
		\eqref{H1:5}
		&=\left(1+O(m^{-\delta_4})\right)  \left(1-2p^2(1-p)^2-p^4\right)^{m}\left(1-e^{-mp^3(1-p)}\right)\\
		&=\left(1+O(m^{-\delta_4})\right) \left(1-2p^2+4p^3-3p^4\right)^m\left(1-e^{-mp^3(1-p)}\right) \nonumber\\
		&=\left(1+O(m^{-\delta_4})\right)  e^{-2mp^2+4mp^3+O(mp^4)}\left(1-e^{-mp^3(1-p)}\right)\nonumber\\
		&=\left(1+O(m^{-\delta_4})\right)  e^{-2mp^2+4mp^3+O(mp^4)}\left(1-e^{-mp^3(1-p)}\right)\nonumber\\
		&=\left(1+O(m^{-\delta_4}+mp^4+p)\right)  e^{-2mp^2+4mp^3}\left(1-e^{-mp^3}\right)
		\nonumber\\
			&=\left(1+O(m^{-\delta_4})\right)  e^{-2mp^2+4mp^3}\left(1-e^{-mp^3}\right)
		\label{Eq:Ex:Cov:2},
	\end{align}
	and similarly
	\begin{align}\nonumber 
		\eqref{H2:5}=&\left(1+O(m^{-\delta_4})\right)  \left(1-2p^2(1-p)^2-p^4\right)^{m}\left(1-e^{-mp^3(1-p)}\right)^2\\ \nonumber
		&+\left(1+O(m^{-\delta_4})\right)  \left(1-2p^2(1-p)^2-p^3(1-p)-p^4\right)^{m}\left(1-e^{-mp^3(1-p)}\right)^2\\ \nonumber
		&+\left(1+O(m^{-\delta_4})\right)  \left(1-2p^2(1-p)^2-2p^3(1-p)-p^4\right)^{m}\left(1-e^{-mp^3(1-p)}\right)^2\\ \nonumber
		&=\left(1+O(m^{-\delta_4})\right)\big(e^{-2mp^2+4mp^3+O(mp^4)}+e^{-2mp^2+3mp^3+O(mp^4)}\\ \nonumber
		&\quad\quad
		+e^{-2mp^2+mp^3+O(mp^4)}\big)
		\left(1-e^{-mp^3(1-p)}\right)^2.\\ \nonumber
		&=\left(1+O(m^{-\delta_4}+p)\right)e^{-2mp^2+4mp^3}\left(1+e^{-mp^3}+e^{-2mp^3}\right)\left(1-e^{-mp^3}\right)^2.\\ 
		&=\left(1+O(m^{-\delta_4})\right)e^{-2mp^2+4mp^3}\left(1-e^{-3mp^3}\right)\left(1-e^{-mp^3}\right).
		\label{Eq:Ex:Cov:3}
	\end{align}
	We obtain the result by subtracting \eqref{Eq:Ex:Cov:2} minus  \eqref{Eq:Ex:Cov:3}. Since $e^{-2mp^2+4mp^3}\left(1-e^{-mp^3}\right)$  is a common factor of both \eqref{Eq:Ex:Cov:2} and  \eqref{Eq:Ex:Cov:3},
	it is enough to bound the difference between the remaining terms. Note that for $mp^3=O(1)$ we have $e^{-3mp^3}\asymp 1$. Hence
	\begin{align*}
		&\left(1+O(m^{-\delta_4})\right)-\left(1+O(m^{-\delta_4})\right)\left(1-e^{-3mp^3}\right)\\
		&= 
		O(m^{-\delta_4})+e^{-3mp^3}=(1+O(m^{-\delta_4}))e^{-3mp^3}.
	\end{align*}
Thus
\begin{align*}
	\pi(P_3)-\pp^2&=\left(1+O(m^{-\delta_4})\right)  e^{-2mp^2+4mp^3}e^{-3mp^3}\left(1-e^{-mp^3}\right)\\
	&=
	\left(1+O(m^{-\delta_4})\right)  e^{-2mp^2+mp^3}\left(1-e^{-mp^3}\right).
\end{align*}
\end{example}

In a similar manner we may obtain the following interesting relation between the probability of a path and a star with the same number of vertices and edges. We omit the details,  as the argument is analogous to that in the previous example,  with the only difference being that a larger number of clique covers need to be considered.

\begin{example}\label{Ex:Star:Path}
	For $mp^3=O(1)$ and $m\to \infty$ we have
	\begin{equation}
		\pi(K_{1, 3})-\pi(P_4)=
		\left(1+O(m^{-\delta_4})\right)e^{-2mp^2+mp^3}\left(1-e^{-mp^3}\right)\left(1-e^{-mp^2+2mp^3}\right).
	\end{equation}
	In particular,
	\begin{equation}
		\pi(K_{1, 3})-\pi(P_4)=
		\begin{cases}
			\left(1+O(m^{-\delta_4}+mp^2)\right)m^2p^5&\text{ for }mp^2=o(1);\\
			\left(1+O(m^{-\delta_4}+mp^3)\right)mp^3e^{-2mp^2}(1-e^{-mp^2})&\text{ for }mp^3=o(1).\\
		\end{cases}
	\end{equation}		

\end{example}

\section*{Acknowledgements}	

Grzegorz Serafin's work was supported by the National Science Centre (NCN),  Poland,   under  research grant no. 2021/43/D/ST1/03244. The funder had no role in study design,  data collection and analysis,  decision to publish,  or preparation of the manuscript.


\begin{thebibliography}{10}
	
	\bibitem{RIGClustering2}
	Mindaugas Bloznelis.
	\newblock Degree and clustering coefficient in sparse random intersection
	graphs.
	\newblock {\em The Annals of Applied Probability}, 23(3):1254--1289, 2013.
	
	\bibitem{RIGsurvey1}
	Mindaugas Bloznelis, Erhard Godehardt, Jerzy Jaworski, Valentas Kurauskas, and
	Katarzyna Rybarczyk.
	\newblock Recent progress in complex network analysis: Models of random
	intersection graphs.
	\newblock In Berthold Lausen, Sabine Krolak-Schwerdt, and Matthias B{\"o}hmer,
	editors, {\em Data Science, Learning by Latent Structures, and Knowledge
		Discovery}, Studies in Classification, Data Analysis, and Knowledge
	Organization, pages 69--78. Springer Berlin Heidelberg, 2015.
	
	\bibitem{RIGsurvey2}
	Mindaugas Bloznelis, Erhard Godehardt, Jerzy Jaworski, Valentas Kurauskas, and
	Katarzyna Rybarczyk.
	\newblock Recent progress in complex network analysis: Properties of random
	intersection graphs.
	\newblock In Berthold Lausen, Sabine Krolak-Schwerdt, and Matthias B{\"o}hmer,
	editors, {\em Data Science, Learning by Latent Structures, and Knowledge
		Discovery}, Studies in Classification, Data Analysis, and Knowledge
	Organization, pages 79--88. Springer Berlin Heidelberg, 2015.
	
	\bibitem{WSNphase2}
	Mindaugas Bloznelis, Jerzy Jaworski, and Katarzyna Rybarczyk.
	\newblock Component evolution in a secure wireless sensor network.
	\newblock {\em Networks}, 53(1):19--26, 2009.
	
	\bibitem{BrennanBresterNagaraj2020Equivalence}
	Matthew Brennan, Guy Bresler, and Dheeraj Nagaraj.
	\newblock Phase transitions for detecting latent geometry in random graphs.
	\newblock {\em Probability Theory and Related Fields}, 178(3–4):1215--1289,
	September 2020.
	
	\bibitem{GpEpidemics}
	Tom Britton, Maria Deijfen, Andreas~N. Lager{\aa}s, and Mathias Lindholm.
	\newblock Epidemics on random graphs with tunable clustering.
	\newblock {\em Journal of Applied Probability}, 45(3):743--756, 2008.
	
	\bibitem{RIGTunableDegree}
	Maria Deijfen and Willemien Kets.
	\newblock Random intersection graphs with tunable degree distribution and
	clustering.
	\newblock {\em Probability in the Engineering and Information Sciences},
	23:661--674, 2009.
	
	\bibitem{Dong2023}
	Liang Dong and Zhishui Hu.
	\newblock The number of triangles in random intersection graphs.
	\newblock {\em Communications in Mathematics and Statistics}, 11(4):695--725,
	January 2023.
	
	\bibitem{FillScheinSinger2000Equivalence}
	James~Allen Fill, Edward~R. Scheinerman, and Karen~B. Singer-Cohen.
	\newblock Random intersection graphs when $m=\omega(n)$: An equivalence theorem
	relating the evolution of the {G}(n, m, p) and {G}(n, p) models.
	\newblock {\em Random Structures \& Algorithms}, 16:156--176, 2000.
	
	\bibitem{RIGGodehardt1}
	Erhard Godehardt and Jerzy Jaworski.
	\newblock Two models of random intersection graphs for classification.
	\newblock In O.~Opitz and M.~Schwaiger, editors, {\em Studies in Classifcation,
		Data Analysis and Knowledge Organization}, volume~22, pages 67--81. Springer,
	2003.
	
	\bibitem{KsiazkaJLR}
	Svante Janson, Tomasz \L~uczak, and Andrzej Rucinski.
	\newblock {\em Random graphs}.
	\newblock Wiley-Interscience Series in Discrete Mathematics and Optimization.
	Wiley-Interscience, New York, 2000.
	
	\bibitem{1999KarSchSin}
	Micha{\l} Karo\'{n}ski, Edward~R. Scheinerman, and Karen~B. Singer-Cohen.
	\newblock On random intersection graphs: The subgraph problem.
	\newblock {\em Combinatorics, Probability and Computing}, 8(1–2):131–159,
	1999.
	
	\bibitem{KimLeeNa2018Equivalence}
	Jeong~Han Kim, Sang~June Lee, and Joohan Na.
	\newblock On the total variation distance between the binomial random graph and
	the random intersection graph.
	\newblock {\em Random Structures \& Algorithms}, 52(4):662--679, January 2018.
	
	\bibitem{Rybarczyk2011Equivalence}
	Katarzyna Rybarczyk.
	\newblock Equivalence of the random intersection graph and {G}(n,p).
	\newblock {\em Random Structures \& Algorithms}, 38:205--234, 2011.
	
	\bibitem{2017RybSta}
	Katarzyna Rybarczyk and Dudley Stark.
	\newblock Poisson approximation of counts of induced subgraphs in random
	intersection graphs.
	\newblock {\em Discrete Mathematics}, 340(9):2183--2193, 2017.
	
	\bibitem{Spirakis2013}
	P.~Spirakis, S.~Nikoletseas, and Ch. Raptopoulos.
	\newblock A guided tour in random intersection graphs.
	\newblock In F.V. Fomin, R.~Freivalds, M.~Kwiatkowska, and D.~Peleg, editors,
	{\em Automata, Languages, and Programming. ICALP 2013.}, volume 7966 of {\em
		Lecture Notes in Computer Science}, pages 29--35, Springer, Berlin,
	Heidelberg, 2013.
	
\end{thebibliography}

\end{document}